\newcommand{\ignore}[1]{}
\newcommand{\Aut}{\operatorname{Aut}}
\newcommand{\sabs}[1]{\lvert {#1} \rvert}
\newcommand{\snorm}[1]{\lVert {#1} \rVert}
\newcommand{\C}{{\mathbb{C}}}
\newcommand{\Z}{{\mathbb{Z}}}
\newcommand{\N}{{\mathbb{N}}}
\newcommand{\bB}{{\mathbb{B}}}
\newcommand{\bR}{{\mathbb{R}}}
\newtheorem{thm}{Theorem}[section]
\newtheorem{prop}[thm]{Proposition}
\newtheorem{cor}[thm]{Corollary}    
\newtheorem{lemma}[thm]{Lemma}
\theoremstyle{definition}
\newtheorem{defn}[thm]{Definition}
\newtheorem{example}[thm]{Example}
\theoremstyle{remark}
\newtheorem{remark}[thm]{Remark}
\newcommand{\avoidbreak}{\postdisplaypenalty=100}
\author{Dusty Grundmeier}
\address{Department of Mathematics, The Ohio State University,
Columbus, OH 43210, USA}
\email{grundmeier.1@osu.edu}
\author{Ji\v{r}\'{\i} Lebl}
\address{Department of Mathematics, Oklahoma State University,
Stillwater, OK 74078, USA}
\email{lebl@okstate.edu}
\date{\today}
\title{Rational Maps of Balls and their Associated Groups}
\keywords{Proper, holomorphic maps, Rational maps, Automorphism groups}
\subjclass[2020]{32H35 (Primary),  32H02 32M99 32M05 (Secondary)}
\begin{document}

\begin{abstract}
Given a proper, rational map of balls, D'Angelo and Xiao introduced five
natural groups encoding properties of the map. We study these groups using a
recently discovered normal form for rational maps of balls. Using this
normal form, we also provide several new groups associated to the map. 
\end{abstract}

\maketitle



\section{Introduction} \label{section:intro}

Suppose $f \colon \bB_n \to \bB_N$ is a rational proper map,
$f = \frac{p}{g}$ where $p \colon \C^n \to \C^N$ and
$g \colon \C^n \to \C$ are polynomials, $\frac{p}{g}$
is in lowest terms,
and $g(0) = 1$.  Further, suppose $f$ is of degree $d$.
We say that $f$ is in \emph{normal form} if $f(0) = p(0) = 0$,
\begin{equation*}
g = 1 + g_2 + \cdots + g_{d-1} 
\quad
\text{and}
\quad
g_2(z) = \sum_{\ell=1}^n \sigma_\ell z_\ell^2
\quad
0 \leq \sigma_1 \leq \cdots \leq \sigma_n ,
\end{equation*}
where $g_k$ are homogeneous polynomials of degree $k$.  That is, 
$f$ is in normal form if it fixes the origin, the denominator has no linear
terms, and the quadratic terms are diagonalized.
In \cite{Lnormal} the second author proved that every rational proper map of balls 
can be put into the normal form above via spherical equivalence and that this
is a normal form up to unitary automorphisms where the source
automorphism fixes $g_2$.

A natural problem is to study holomorphic maps invariant under subgroups
of the automorphism group. This problem has a long history, going back
to at least Cartan in \cite{C1}. We refer the reader to
\cites{DX1, DX2, CatlinDAngelo, MRC,DL, F1,F2, G1, Rudin} and especially
D'Angelo's books \cites{JPDrationalbook,JPDhypersurfaces,JPDhermitian}.
Rudin \cite{Rudin} and Forstneri{\v c} \cite{F1} studied invariant proper
maps to general domains in $\mathbb{C}^n$. In this context, one can
construct proper maps invariant under any finite subgroup of the unitary
group $U(n)$. On the other hand, if the target domain is required to be
a unit ball, then the problem has more structure. Forstneri{\v c}~\cite{F3}
showed that if the map is sufficiently smooth up to the boundary, then
the map must be rational, and furthermore, in \cite{F2}, he showed that
for most groups, it is not possible to find a proper, rational map of
balls invariant under that group. In \cite{DL}, D'Angelo and Lichtblau
gave the decisive result, completely characterizing which groups admit invariant, 
proper, rational maps between balls.
 
Moreover, given a proper map of balls, one can use various groups to
detect various properties of the map.
In particular, D'Angelo and Xiao~\cites{DX1,DX2} introduced the
groups $A_f$, $G_f$, $\Gamma_f$,
$T_f$, and $H_f$, defined below.  Using the normal form, 
we add several new groups, $D_f$, $\Sigma_f$, $\Delta^{(a,b)}_f$.

\begin{defn}
\pagebreak[2]
Suppose $f \colon \bB_n \to \bB_N$ is a rational proper map.
\begin{enumerate}[(i)]
\item
$A_f$ is the subgroup of $\Aut(\bB_n) \oplus \Aut(\bB_N)$ such that
$(\varphi,\tau) \in A_f$ if $\tau \circ f = f \circ \varphi$.
\item
$\Gamma_f$ is the subgroup of $\Aut(\bB_n)$ such that
$\varphi \in \Gamma_f$ if there is a $\tau \in \Aut(\bB_N)$
such that $\tau \circ f = f \circ \varphi$.
\item
$G_f$ is the subgroup of $\Aut(\bB_n)$ such that
$\varphi \in G_f$ if $f = f \circ \varphi$.
\item
$T_f$ is the subgroup of $\Aut(\bB_N)$ such that
$\varphi \in T_f$ if there is a $\varphi \in \Aut(\bB_n)$
such that $\tau \circ f = f \circ \varphi$.
\item
$H_f$ is the subgroup of $\Aut(\bB_N)$ such that
$\tau \in H_f$ if $\tau \circ f = f$.
\end{enumerate}
\end{defn}

Given a polynomial $\rho(z, \bar{z})$, let $\rho_{(a,b)}$ denote the monomials of $\rho$ that are of degree $a$ in the holomorphic $z$ variables, and degree $b$ in the antiholomorphic $\bar{z}$
variables. We refer $\rho_{(a,b)}$ as the bidegree-$(a,b)$ part of $\rho$, and we write $*$ instead of $a$ or $b$ to denote all degrees together.

\begin{defn} Let $f=\frac{p}{g}\colon \bB_n \to \bB_N$ be a proper, rational map in normal form.
\begin{enumerate}[(i)]
\item
$D_f$ is the subgroup of $U(n)$ such that $U \in D_f$ when $g \circ U = g$.
\item
$\Sigma_f$ is the subgroup of $U(n)$ such that $U \in \Sigma_f$ when $g_2 \circ U = g_2$.
\item
$\Delta^{(a,b)}_f$ is the subgroup of $U(n)$ such that $U \in \Delta^{(a,b)}_f$ when
$$\bigl(\sabs{g(z)}^2-\snorm{p(z)}^2\bigr)_{(a,b)} =
\bigl(\sabs{g(Uz)}^2-\snorm{p(Uz)}^2\bigr)_{(a,b)}.$$

\end{enumerate}
\end{defn}

All the groups are closed, and therefore Lie subgroups. D'Angelo--Xiao~\cite{DX2} showed this for the groups $A_f,\Gamma_f,G_f,T_f,H_f$, and it is immediate for the new groups we defined.
As long as they are compact, it follows from standard theory that they can be conjugated to
a subgroup of the unitary group.  In fact, $\Gamma_f$ is noncompact
if and only if $f$ is linear fractional (an automorphism if $n=N$), see~\cite{DX2}.
Moreover, Lichtblau~\cite{Li} (see also D'Angelo--Lichtblau~\cite{DL}) has shown
that $G_f$ must be finite, fixed-point-free, and cyclic.
In the present paper, we prove is that once the map is in normal form, the relevant groups are all subgroups of the unitary group.
Note that if $f$ is in normal form and it is linear fractional, then it is in fact linear.

\begin{thm} \label{thm1}
Suppose $f \colon \bB_n \to \bB_N$ is a rational proper map in normal form
and $f$ is not linear.  Then
\begin{enumerate}[(i)]
\item
$A_f \leq U(n) \oplus U(N)$ is a closed subgroup.
\item
$G_f \leq \Gamma_f \leq D_f \leq \Sigma_f \leq U(n)$
and
$\Gamma_f \leq \Delta^{(a,b)}_f$ 
are all closed subgroups.
\item
$H_f \leq U(N)$ and $T_f \leq U(N)$ are closed subgroups.
\end{enumerate}
\end{thm}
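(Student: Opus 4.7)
The plan is to reduce everything to the uniqueness clause of the normal form theorem from~\cite{Lnormal}. Unpacked for our purposes, that clause says: if $f$ is in normal form and $f = \tau \circ f \circ \varphi^{-1}$ for some $\varphi \in \Aut(\bB_n)$ and $\tau \in \Aut(\bB_N)$, then necessarily $\varphi \in U(n)$ with $g_2 \circ \varphi = g_2$, and $\tau \in U(N)$. I apply this directly. If $(\varphi, \tau) \in A_f$, the equation $\tau \circ f = f \circ \varphi$ rearranges to $f = \tau \circ f \circ \varphi^{-1}$, so uniqueness yields $(\varphi, \tau) \in U(n) \oplus U(N)$, proving (i). Projecting and specializing this same observation immediately gives $\Gamma_f \leq U(n)$ and $T_f \leq U(N)$, together with $G_f \leq \Gamma_f$ and $H_f \leq T_f$ (take $\tau = \id$ or $\varphi = \id$), which are the unitary containments of (ii) and (iii). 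Closedness of $A_f, \Gamma_f, G_f, T_f, H_f$ was already established by D'Angelo--Xiao in~\cite{DX2}.

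For the refined chain $\Gamma_f \leq D_f \leq \Sigma_f$ and the inclusion $\Gamma_f \leq \Delta^{(a,b)}_f$, I would argue at the level of the numerator and denominator of $f = p/g$. Given $\varphi \in \Gamma_f$ with associated unitary $\tau$, the relation $\tau \circ f = f \circ \varphi$ reads $(\tau \circ p)/g = (p \circ \varphi)/(g \circ \varphi)$. Both sides remain in lowest terms: the ideal generated by the components of $p$ together with $g$ is unchanged under the unitary linear recombination by $\tau$ and under the invertible linear substitution $z \mapsto \varphi z$. Since $\varphi(0) = 0$, both denominators satisfy $g(0) = (g \circ \varphi)(0) = 1$, so the uniqueness of the lowest-terms representation with normalized denominator forces
\[
g \circ \varphi = g \quad \text{and} \quad p \circ \varphi = \tau \circ p.
\]
The first equality says $\varphi \in D_f$; extracting the degree-$2$ homogeneous part gives $g_2 \circ \varphi = g_2$, so $D_f \leq \Sigma_f$. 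Unitarity of $\tau$ turns $p \circ \varphi = \tau \circ p$ into $\sabs{p(\varphi z)}^2 = \sabs{p(z)}^2$, and likewise $\sabs{g(\varphi z)}^2 = \sabs{g(z)}^2$; hence $\sabs{g(\varphi z)}^2 - \sabs{p(\varphi z)}^2$ and $\sabs{g(z)}^2 - \sabs{p(z)}^2$ agree as full functions, and therefore bidegree-by-bidegree, placing $\varphi$ in $\Delta^{(a,b)}_f$ for every $(a,b)$.

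Closedness of the three new groups is then immediate: each is cut out of $U(n)$ by finitely many polynomial identities in the entries of $U$. I do not expect a serious obstacle; the substantive step is invoking the uniqueness clause of~\cite{Lnormal} correctly, and the hypothesis that $f$ is not linear is precisely what ensures that $\Gamma_f$ (and thus the whole picture) is compact, so that we obtain a bona fide unitary containment rather than merely conjugacy into $U(n)$.
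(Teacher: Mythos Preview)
Your argument is correct and follows essentially the same outline as the paper. The one noteworthy divergence is in how you obtain $\Gamma_f \leq D_f$: you argue directly via uniqueness of the lowest-terms representation (concluding $g \circ \varphi = g$ and $p \circ \varphi = \tau p$ at once), whereas the paper first proves the identifications $D_f = \Delta^{(*,0)}_f$ and $\Sigma_f = \Delta^{(2,0)}_f$ by complexifying the underlying form and setting $\bar z = 0$, and then reads off $\Gamma_f \leq \Delta^{(*,0)}_f = D_f$ from the already-established inclusion $\Gamma_f \leq \Delta^{(a,b)}_f$. Your route is slightly more elementary; the paper's route yields those extra identifications as a separate lemma, which are not part of the theorem statement but are structurally informative. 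For part~(i) you invoke the uniqueness clause of the normal form theorem as a black box, while the paper unpacks it via the unique critical point of the $\Lambda_f$-function; since both come from the same source~\cite{Lnormal}, there is no real difference in content.
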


In particular, for a mapping that is not an automorphism, 
once the mapping is in normal form, all the groups are subgroups of the
unitary group.  One motivation for introducing the groups $\Sigma_f$, $D_f$,
and $\Delta^{(a,b)}_f$ is that
they are often easier to compute.
If the $\sigma$ invariants are nonzero and distinct, namely,
$0 < \sigma_1 < \ldots < \sigma_n$, then $\Sigma_f \cong (\Z_2)^n$.
Therefore, a corollary is that $G_f$ must be cyclic and
fixed-point-free for such $f$.  In general, standard linear algebra says that
$\Sigma_f$ is a direct sum of groups where if we have $k$ zero $\sigma$'s,
the first factor is $U(k)$, and for each set of $k$ nonzero equal $\sigma$s
we get a factor of $O(k)$ (real orthogonal group).  In particular, when all $\sigma$s are
nonzero and distinct we get a direct sum of $O(1)=\{1,-1\}$.

\begin{cor} \label{cor1}
Suppose $f = \frac{p}{g} \colon \bB_n \to \bB_N$
is a rational proper map in normal form
such that $0 < \sigma_1 < \ldots < \sigma_n$.
Then $G_f$ is either trivial, or $G_f = \{ I, -I \}$.
In particular,
if $G_f = \{ I, -I \}$, then $p(z) = p(-z)$ and $g(z)=g(-z)$, and the degree of $f$ is at least 4.

Furthermore, for any sufficiently small
$0 < \sigma_1 < \ldots < \sigma_n$, there exists a degree 4 map
$f = \frac{p}{g} \colon \bB_n \to \bB_N$ 
where $g(z) =  1+\sigma_1 z_1^2 + \cdots + \sigma_n z_n^2$ and
such that $G_f = \{ I, -I \}$.
\end{cor}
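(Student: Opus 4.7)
\emph{Structure of $G_f$.} By Theorem~\ref{thm1}(ii), $G_f \leq \Sigma_f \leq U(n)$, so the first step is to compute $\Sigma_f$. A unitary $U$ belongs to $\Sigma_f$ iff $g_2(Uz) = g_2(z)$, i.e.\ $U^T D U = D$ with $D = \operatorname{diag}(\sigma_1,\dots,\sigma_n)$. Combining with $U^*U = I$ rearranges to $DU = \bar U D$, yielding entrywise $\sigma_i U_{ij} = \sigma_j \overline{U_{ij}}$. For $i \neq j$, distinctness of the $\sigma_\ell$ forces $U_{ij} = 0$; for $i = j$, $U_{ii}$ must be real, and unitarity makes it $\pm 1$. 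Hence $\Sigma_f$ is the group of diagonal $\pm 1$ matrices. Lichtblau's theorem makes $G_f$ finite, cyclic, and fixed-point-free, so $G_f$ is a cyclic subgroup of $(\Z_2)^n$ of order $1$ or $2$, and the only involution in $(\Z_2)^n$ with no $+1$ eigenvalue is $-I$. Thus $G_f \in \{\{I\},\{I,-I\}\}$.

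\emph{Parity of $p,g$ and the degree bound.} If $G_f = \{I,-I\}$, then $f(z) = f(-z)$, so $p(z) g(-z) = p(-z) g(z)$. Since $\gcd(p,g) = 1$, $g(z) \mid g(-z)$ and conversely, hence $g(-z) = c\, g(z)$; the normalization $g(0) = 1$ gives $c = 1$, so $g(-z) = g(z)$ and therefore $p(-z) = p(z)$. Both $p$ and $g$ are even, so $\deg p,\deg g$ are both even. The hypothesis $\sigma_\ell > 0$ gives $g_2 \neq 0$, hence $\deg g \geq 2$. The normal-form constraint $\deg g \leq d-1$ with $d = \deg f$ excludes $d = 2$ (would force $\deg g \leq 1$) and $d = 3$ (would require $\max(\deg p,\deg g) = 3$, odd). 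Hence $d \geq 4$.

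\emph{Existence of a degree-$4$ example.} For $n = 1$, I propose the explicit map
\[
f(z) = \frac{z^4 + \sigma z^2}{1 + \sigma z^2},
\]
which is in normal form with both $p$ and $g$ even; a direct computation gives
\[
|g|^2 - |p|^2 = (1 - |z|^2)(1 + |z|^2)\bigl[(1 + |z|^4) + 2\sigma\,\Re z^2\bigr],
\]
where the bracket is bounded below by $1 - \sigma^2 > 0$ on the closed disk for $0 < \sigma < 1$, so $f$ is a proper rational degree-$4$ map $\bD \to \bD$ with $G_f = \{I,-I\}$. For $n \geq 2$, I plan a perturbation argument based at the quartic Veronese $V_4 \colon \bB_n \to \bB_N$, which at $\sigma = 0$ is a proper even polynomial map in normal form with $g = 1$. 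The strategy is to invoke the implicit function theorem: deform $V_4$ within the space of even degree-$4$ vector-valued polynomials so that the propriety equation $|p|^2 - |g|^2 \in (|z|^2 - 1)$ continues to hold, with $g = 1 + \sum_\ell \sigma_\ell z_\ell^2$. The hardest step is surjectivity of the linearization: as $\dot p$ ranges over even degree-$2$ vector-valued polynomials, $\Re\langle V_4, \dot p\rangle$ must realize, modulo $(|z|^2-1)$, every real part $\Re\bigl(\sum_\ell \dot\sigma_\ell z_\ell^2\bigr)$. This reduces to a concrete linear-algebra check in $\C[z,\bar z]/(|z|^2-1)$; once verified, the implicit function theorem yields the family $f_\sigma$ for $\sigma$ in a neighbourhood of $0$, and by the first part this $f_\sigma$ automatically has $G_{f_\sigma} = \{I,-I\}$.
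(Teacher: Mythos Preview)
Your first two parts are correct and essentially match the paper: the paper obtains $g(-z)=g(z)$ from $G_f\le D_f$ rather than from the $\gcd$ condition, and asserts the structure of $\Sigma_f$ without your explicit matrix computation, but these are cosmetic differences. Your $n=1$ existence example is also correct and pleasantly explicit (and in fact lands in a lower-dimensional target than the paper's general construction would give).

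The gap is in the $n\ge 2$ existence argument. The implicit-function-theorem scheme is reasonable in outline, but it stands or falls on the surjectivity of the linearization, which you assert without proof. Concretely you must show that for every $(\dot\sigma_1,\dots,\dot\sigma_n)$ there is an even $\dot p$ with $\Re\langle V_4,\dot p\rangle \equiv \Re\bigl(\sum_\ell \dot\sigma_\ell z_\ell^2\bigr)$ modulo $(\snorm{z}^2-1)$; this is a genuine bidegree-$(4,2)$ computation that you have not carried out, and until it is done the IFT step is unjustified. The paper sidesteps this entirely with a direct Hermitian-form construction (Proposition~\ref{prop:constructZ2}): set $R=\tfrac12\snorm{z}^4+\tfrac12\snorm{z}^8$ and $r=(G+\bar G)(1-\snorm{z}^4)-R+1$ with $G=\sum_j\sigma_j z_j^2$; for small $\sigma$ the coefficient matrix of $r$, restricted to even-degree monomials, has signature $(1,N)$, and $r-\sabs{1+G}^2$ is then negative semidefinite of rank $N$ using only even-degree monomials, so it decomposes as $-\snorm{p}^2$ with $p$ even of degree $4$. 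This is uniform in $n$, avoids any linearization analysis, and yields lowest terms and the exact degree immediately from the bidegree-$(d,d)$ part of $r$.
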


In particular, the corollary says that when $\sigma_1, \ldots, \sigma_n$ are
nonzero and distinct, then for $G_f$ to be nontrivial, the map $f$ must have degree at least 4.  Hence, when the degree of $f$ is 3, we have that $G_f$
is trivial, but we then note that $D_f = \Sigma_f \cong (\Z_2)^n$.
It is not difficult to find a map of degree 4 or higher
with a denominator of the form
\begin{equation}
1+\sigma_1 z_1^2 + \cdots + \sigma_n z_n^2
+ \epsilon_1 z_1^3+\cdots+\epsilon_n z_n^3
\avoidbreak
\end{equation}
for small enough $\sigma$'s and $\epsilon$'s
(see Proposition~\ref{prop:constructwithdenom} for example),
in which case $D_f = \{ I \}$.

A natural problem is to determine what properties of the map $f$ can be detected using the above groups. In particular, we focus on $\Gamma_f$. 
 D'Angelo and Xiao~\cite{DX1} proved that a rational
proper map of balls whose $\Gamma_f$ group is not compact
is spherically equivalent to the linear embedding and therefore is linear
fractional, in which case $\Gamma_f = \Aut(\bB_n)$.
Gevorgyan, Wang, and Zimmer~\cite{Z}
extended this result to maps that are only $C^2$
up to the boundary.
D'Angelo and Xiao~\cite{DX1} further proved that $\Gamma_f$ contains the torus
if and only if $f$ is spherically equivalent
to a monomial map (a map where each component is a single monomial).
They also prove that if the group $\Gamma_f$ contains the center of $U(n)$
(the circle group $\{ e^{i\theta} I \}$), then the map $f$ is spherically equivalent to a polynomial map.
They also show that for any finite subgroup $\Gamma$, there exists a rational $f$
such that $\Gamma_f = \Gamma$.
We give a slightly stronger version of this result in the next theorem. We say a subgroup $\Gamma \leq U(n)$ is defined by finitely many invariant polynomials if there exists polynomials $\rho_1,\ldots, \rho_\ell$ such that $$\Gamma = \bigl\{ U \in U(n) : \rho_j(Uz,\overline{Uz}) = \rho_j(z,\bar{z}), j=1,\ldots, \ell
\bigr\}.$$


\begin{thm} \label{thm:char}
Suppose that $f \colon \bB_n \to \bB_N$ is a rational proper map in normal
form and $f$ is not linear.  Then $\Gamma_f \leq U(n)$ is a subgroup that is
defined by a single invariant polynomial.

Conversely, given any group $\Gamma \leq U(n)$ that is defined by finitely many
invariant polynomials, there exists a rational, proper map
$f \colon \bB_n \to \bB_N$ such that $\Gamma_f = \Gamma$.  Moreover, this
map $f$ can be chosen to be a polynomial that takes the origin to origin,
and hence in normal form.
\end{thm}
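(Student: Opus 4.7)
The plan is to identify $\Gamma_f$ with the stabilizer, in $U(n)$, of the single real polynomial $\rho(z,\bar z) := |g(z)|^2 - |p(z)|^2$, and for the converse to realize a prescribed $\Gamma$ as such a stabilizer through an equivariant polynomial construction.

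For the forward direction, I would prove $\Gamma_f = \{U \in U(n) : \rho(Uz,\overline{Uz}) = \rho(z,\bar z)\}$. If $U \in \Gamma_f$, Theorem~\ref{thm1} gives $U \in U(n)$ together with a unitary $V \in U(N)$ (the target automorphism) satisfying $V \circ f = f \circ U$. Rewriting, $Vp/g = (p \circ U)/(g \circ U)$; since $U$ is invertible, coprimality is preserved, so the right side is in lowest terms with denominator equal to $1$ at the origin, and uniqueness of such reduced representations forces $g \circ U = g$ and $p \circ U = Vp$. Hence $|p \circ U|^2 = |p|^2$, so $\rho$ is preserved. For the reverse inclusion, I would polarize by replacing $\bar z$ with an independent variable $w$, obtaining
\[
\tilde\rho(z,w) = g(z)\,\bar g(w) - \sum_{j=1}^{N} p_j(z)\,\bar p_j(w),
\]
where $\bar g, \bar p_j$ denote the polynomials with conjugated coefficients. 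The hypothesis $\rho(Uz,\overline{Uz}) = \rho(z,\bar z)$ extends to the polynomial identity $\tilde\rho(Uz, \bar U w) = \tilde\rho(z,w)$ because the two sides agree on the totally real diagonal $w = \bar z$. Setting $w = 0$ and using $g(0) = 1$, $p(0) = 0$ forces $g \circ U = g$, and hence $|p \circ U|^2 = |p|^2$. By the classical uniqueness of Hermitian sum-of-squares decompositions (after reducing $N$ if necessary so that the components of $p$ are linearly independent), there is $V \in U(N)$ with $p \circ U = V p$, so $V \circ f = f \circ U$ and $U \in \Gamma_f$.

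For the converse direction, given a closed $\Gamma \leq U(n)$ whose invariant ring is generated by $\rho_1,\ldots,\rho_k$ (so that $\Gamma$ is exactly their common stabilizer), I would construct a polynomial proper map $f \colon \bB_n \to \bB_N$ with $f(0)=0$ and $\Gamma_f = \Gamma$. Picking a faithful unitary representation $\pi \colon \Gamma \to U(M)$, I would first assemble a $\Gamma$-equivariant polynomial $p_0 \colon \C^n \to \C^M$ (so $p_0(\gamma z) = \pi(\gamma) p_0(z)$) of sufficiently high degree that $|p_0|^2$, together with $|z|^2$, recovers each generator $\rho_j$ of $\C[z,\bar z]^\Gamma$; since $\pi$ is unitary, $|p_0|^2$ is automatically $\Gamma$-invariant. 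To convert $p_0$ into a proper polynomial map of balls without enlarging the stabilizer, I would apply D'Angelo's tensor-product operation $f \mapsto (z \otimes f_A) \oplus f_{A^\perp}$ with $A \subset \C^M$ a $\Gamma$-invariant subspace, iterating until $|f|^2 = 1$ on $\partial \bB_n$; these operations preserve both properness and $\Gamma$-equivariance, and the resulting $f$ is automatically in normal form since $f(0)=0$ and the denominator is trivial.

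The hard part will be verifying that the stabilizer of $|f|^2$ in $U(n)$ does not strictly exceed $\Gamma$. The plan is to arrange the equivariant components of $f$, through genericity in the choice of coefficients and sufficiently high tensor degree, so that any $U \in U(n)$ preserving $|f|^2$ must individually fix each generator $\rho_j$; then $U \in \Gamma$ by the defining property of $\Gamma$. Once this is in hand, applying the already-established forward direction to the constructed $f$ yields $\Gamma_f = \Gamma$.
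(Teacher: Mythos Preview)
Your forward direction is essentially the paper's argument, just with Lemma~\ref{lemma:targetautform} unwound explicitly via polarization and the sum-of-squares uniqueness; this is correct and matches the paper's route through Lemma~\ref{lemma:gammapreservesr}.

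Your converse, however, has two genuine gaps. First, D'Angelo's partial tensor operation $f \mapsto (z \otimes f_A) \oplus f_{A^\perp}$ \emph{preserves} the condition $\snorm{f}^2 = 1$ on $\partial\bB_n$ when it already holds (it replaces $\snorm{f_A}^2$ by $\snorm{z}^2\snorm{f_A}^2$, which agrees on the sphere), but it cannot manufacture properness from an arbitrary equivariant $p_0$. There is no mechanism in your outline that actually produces a map with $\snorm{f}^2=1$ on the sphere. Second, your ``hard part'' is genuinely the entire content of the converse: you must exhibit a map whose underlying form has stabilizer \emph{exactly} $\Gamma$, and an appeal to genericity and high tensor degree is not a plan. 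For instance, if $\Gamma$ is a positive-dimensional torus, most equivariant constructions will carry extra symmetry, and you give no concrete device for forcing each $\rho_j$ to be individually recoverable from $\snorm{f}^2$.

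The paper's construction avoids both issues at once. It first combines $\rho_1,\ldots,\rho_\ell$ into a single $\rho$ by multiplying each $\rho_j$ by a distinct power of $\snorm{z}^2$ so that no two occupy overlapping degree ranges; then invariance of $\rho$ under $U$ is equivalent to invariance of every $\rho_j$, which by hypothesis is exactly membership in $\Gamma$. Next it sets $R_\epsilon = R + \epsilon\,\rho\,(1-\snorm{z}^2)$ where $R(z,\bar z) = \tfrac{1}{d}\sum_{j=1}^d \snorm{z}^{2j}$ is a $U(n)$-invariant Hermitian sum of squares equal to $1$ on the sphere. For small $\epsilon$ the coefficient matrix of $R_\epsilon$ stays positive semidefinite (and $R_\epsilon$ still vanishes on the row and column for the constant monomial), so $R_\epsilon = \snorm{f}^2$ for a polynomial proper map $f$ with $f(0)=0$. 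Since $R$ and $1-\snorm{z}^2$ are $U(n)$-invariant, $U$ preserves $R_\epsilon$ iff $U$ preserves $\rho$, i.e., iff $U \in \Gamma$. This directly gives $\Gamma_f = \Gamma$ with no genericity argument needed.
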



We conclude the introduction by outlining the results of the paper. In Section \ref{section:hermitian}, we briefly recall the usual background on Hermitian forms. In Section \ref{section:subgroupsofunit}, we give a sequence of lemmas to prove Theorem \ref{thm1}. In Section \ref{section:charGammaf}, we prove Theorem \ref{thm:char} and characterize the possible $\Gamma_f$ groups. In Section \ref{section:constructions}, we consider the problem of constructing maps with a given denominator. Finally, in Section \ref{section:gensigmas}, we prove Corollary, \ref{cor1} and give an example.


\section{Hermitian forms}
\label{section:hermitian}

In this section, we briefly recall the standard setup to treat real-valued polynomials as Hermitian forms (see Chapter 1 of \cite{JPDrationalbook} for more details on this approach).
A real-valued polynomial in $\C^n$ can be written as a polynomial
\begin{equation}
r(z,\bar{z}) = \sum_{\alpha\beta} c_{\alpha\beta} z^\alpha \bar{z}^\beta ,
\end{equation}
where $\alpha$ and $\beta$ are multiindices. The coefficients
$c_{\alpha\beta}$ can be put into a matrix $[c_{\alpha\beta}]_{\alpha,\beta}$
by putting an order on the monomials (and hence the multiindices) and
having $\alpha$ refer to rows and $\beta$ to columns.  The matrix $[c_{\alpha\beta}]_{\alpha,\beta}$
is called the \emph{matrix of coefficients} of $r$. The polynomial $r$ is real-valued if and only if the matrix $[c_{\alpha\beta}]_{\alpha,\beta}$ is Hermitan. 
Diagonalizing the matrix of coefficients then yields
\begin{equation}
r(z,\bar{z}) = \snorm{P(z)}^2-\snorm{G(z)}^2
\end{equation}
where $\snorm{\cdot}$ denotes the standard Hermitian norm, and $P \colon
\C^n \to \C^a$ and $G \colon \C^n \to \C^b$ are polynomials whose components
are linearly independent.  As the rank is $a+b$, this expansion cannot be
done with fewer than $a+b$ polynomials.  In particular, if the matrix of coefficients of $r$
is positive semidefinite, then $r(z,\bar{z}) = \snorm{P(z)}^2$, and
conversely, if $r$ is a Hermitian sum of squares, its matrix is positive
semidefinite.
The polynomial $P$ in $r(z,\bar{z})=\snorm{P(z)}^2$
only needs to use
those monomials that correspond to nonzero rows
or columns of the matrix.  Hence, if the entries in the matrix corresponding
to purely holomorphic or purely antiholomorphic terms are all zero, then
this corresponds to the row and column corresponding to the monomial $1$.
In other words, in this case, $P$ can be chosen to have no constant
term, that is, $P(0)=0$.


\section{Groups are subgroups of the unitaries}
\label{section:subgroupsofunit}

Given a proper rational map
$\frac{p}{g} \colon \bB_n \to \bB_N$, consider
its \emph{underlying form}
\begin{equation}
r(z,\bar{z}) = \sabs{g(z)}^2-\snorm{p(z)}^2 .
\end{equation}
Since rescaling $r$ by a positive constant does not change the corresponding map,
we will generally normalize $r$ so that $r(0,0)=1$.
The following observation was made in \cite{L2011},
and also used later by D'Angelo--Xiao~\cite{DX1}.
That is, two maps differ by a target automorphism if and only if
the underlying forms are the same (up to rescaling).

\begin{lemma}[Lemma 2.1 from \cite{Lnormal}] \label{lemma:targetautform}
Suppose
$\frac{p}{g} \colon \bB_n \to \bB_N$ 
and $\frac{P}{G} \colon \bB_n \to \bB_N$ are proper rational maps
written in lowest terms such that
$\sabs{g(0)}^2-\snorm{p(0)}^2 = 1$ and
$\sabs{G(0)}^2-\snorm{P(0)}^2 = 1$.
Then there exists a $\tau \in \Aut(\bB_N)$ such that
\begin{equation}
\tau \circ \frac{p}{g} = \frac{P}{G}
\qquad
\text{if and only if}
\qquad
\sabs{g(z)}^2-\snorm{p(z)}^2 = 
\sabs{G(z)}^2-\snorm{P(z)}^2 .
\end{equation}
\end{lemma}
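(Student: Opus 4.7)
The key object is the homogeneous-coordinate lift: a proper rational map $p/g\colon\bB_n\to\bB_N$ corresponds to the $\C^{N+1}$-valued polynomial $(g,p_1,\ldots,p_N)$, and $\Aut(\bB_N)$ is exactly the projectivization of the group $U(1,N)$ acting on $\C^{N+1}$ preserving the Lorentzian form $\sabs{w_0}^2-\snorm{w'}^2$. Under this dictionary, both directions reduce to the assertion that $(G,P)=\Phi(g,p)$ for some $\Phi\in U(1,N)$ is equivalent to the equality of underlying forms.

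For the forward direction, lift the given $\tau\in\Aut(\bB_N)$ to some $\Phi\in U(1,N)$; the tuple $\Phi(g,p)$ then represents $\tau\circ(p/g)=P/G$ in homogeneous coordinates, so $\Phi(g,p)=h\cdot(G,P)$ for some polynomial $h\in\C[z]$. Invertibility of $\Phi$ together with the lowest-terms hypotheses on both tuples forces $h$ to be constant, and the two normalizations combined with the $U(1,N)$-invariance of the Lorentzian form then force $\sabs{h}=1$. Absorbing $h$ into $\Phi$ (which remains in $U(1,N)$) yields $(G,P)=\Phi(g,p)$, from which $\sabs{G}^2-\snorm{P}^2=\sabs{g}^2-\snorm{p}^2$ is immediate.

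For the converse, assume $r:=\sabs{g}^2-\snorm{p}^2=\sabs{G}^2-\snorm{P}^2$. The plan is to construct $\Phi\in U(1,N)$ with $(G,P)=\Phi(g,p)$; the induced $\tau\in\Aut(\bB_N)$ is then the desired map. Following Section~\ref{section:hermitian}, $r$ determines a Hermitian matrix of coefficients $M$, and the normalization $r(0,0)=1$ together with the two representations show that $M$ has signature $(1,k)$ for some common $k\leq N$, equal to the dimension of the linear span of the $p_j$'s (and of the $P_j$'s). The crux is then the uniqueness of indefinite factorizations: any two signature-$(1,k)$ decompositions of $M$ are related by an element of $U(1,k)$, which would be proved by a Gram--Schmidt procedure relative to the indefinite Hermitian form that $M$ induces on the common $(1+k)$-dimensional column span. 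Extending the resulting $U(1,k)$ change of basis to $\Phi\in U(1,N)$ by a unitary on the null directions produces $(G,P)=\Phi(g,p)$, and passing to the associated ball automorphism finishes the argument.

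The main obstacle I anticipate is this indefinite-factorization uniqueness, particularly when $k<N$ (i.e., when some of the $p_j$ or $P_j$ are linearly dependent): one must extend from $U(1,k)$ to $U(1,N)$ in a way that respects the polynomial tuples and not only the matrix $M$. A secondary but genuine subtlety is tracking how the common normalization interacts with the scalar ambiguity $h$ in the forward direction, which crucially uses signature preservation by $\Phi$.
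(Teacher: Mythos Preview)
The paper does not prove this lemma: it is quoted verbatim as ``Lemma 2.1 from \cite{Lnormal}'' and the text merely remarks that the underlying observation goes back to \cite{L2011}. There is therefore no proof in the present paper to compare your proposal against.

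That said, your outline is the standard argument and is correct. The forward direction is exactly as you describe. For the converse, the indefinite--factorization step you flag as the main obstacle does go through: once you pass to linearly independent components, the two coefficient matrices $A,B$ of $(g,\hat p)$ and $(G,\hat P)$ have full row rank $1+k$ and satisfy $A^{*}J_kA=B^{*}J_kB$; since $\operatorname{col}(A^{*})=\operatorname{col}(M)=\operatorname{col}(B^{*})$ one gets $B=\Psi A$ for some matrix $\Psi$, and then $A^{*}(J_k-\Psi^{*}J_k\Psi)A=0$ with $A$ of full row rank forces $\Psi\in U(1,k)$. The extension from $U(1,k)$ to $U(1,N)$ requires a small amount of care: the naive block $\begin{psmallmatrix}\alpha & \beta^{*}V^{*}\\ W\gamma & W\delta V^{*}\end{psmallmatrix}$ built from isometries $V,W$ lands in $U(1,N)$ only when $k=N$; the correct move is to complete $V,W$ to unitaries $\tilde V,\tilde W\in U(N)$ and take $\Phi=\operatorname{diag}(1,\tilde W)\,(\Psi\oplus I_{N-k})\,\operatorname{diag}(1,\tilde V^{*})$, which is a genuine product in $U(1,N)$. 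With that adjustment your sketch is complete.
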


The lemma says that the group $\Gamma_f$
is precisely the group that leaves the underlying form 
unchanged up to rescaling.  We prove Theorem~\ref{thm1} in stages using the following lemmas.

We briefly recall some properties of automorphisms and the normal form from \cite{Lnormal}. Let $\alpha \in \bB^n$ and $t=\sqrt{1-\snorm{\alpha}^2}$. Then every automorphism of $\bB_n$ is of the form $U\varphi_{\alpha}$ where $U \in U(n)$ and 
\begin{equation} \label{e:auto}
    \varphi_{\alpha}(z)=\frac{\alpha-L_{\alpha}z}{1-\langle z, \alpha \rangle},
\quad \text{ and }\quad
    L_\alpha z= \frac{\langle z, \alpha \rangle}{t+1}\alpha + t z.
\end{equation} See Chapter 1 of \cite{JPDrationalbook} for background on automorphisms of $\bB_n$.
Now, we define the $\Lambda_f: \bB_n \to \bR$ function by
\begin{equation}
\Lambda_f(z,\bar{z}) =
\frac{\sabs{g(z)}^2-\snorm{p(z)}^2}{(1-\snorm{z}^2)^d}
=
\frac{r(z,\bar{z})}{(1-\snorm{z}^2)^d}.
\end{equation} In \cite{Lnormal}, the second author proves the following properties of the $\Lambda_f$ function.

\begin{lemma}[From \cite{Lnormal}]
    \label{lemma:Lnormal} If $f=\frac{p}{g}: \bB_n \to \bB_N$ is a rational proper map of degree $d>1$, in lowest terms, then the following hold.
    \begin{enumerate}
        \item If $\tau \in \Aut(\bB_N)$, then $\Lambda_f=\Lambda_{\tau \circ f}$.
        \item If $\psi \in \Aut(\bB_n)$, then $\Lambda_{f} \circ \psi= C \Lambda_{f \circ \psi}$.
        \item The $\Lambda$-function has a unique critical point in $\bB_n$.
        \item There exists $\tau \in \Aut(\bB_N)$ such that $\tau \circ f \circ \varphi_\alpha$ takes the origin to the origin and has no linear terms in its denominator when written in lowest terms if and only if $\alpha \in \bB_n$ is the critical point of the corresponding $\Lambda$-function.
    \end{enumerate}
\end{lemma}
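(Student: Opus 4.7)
The plan is to view $\Lambda_f$ as an intrinsic invariant of $f$ whose transformation law under the $\Aut$-actions on both sides is computable, and whose unique interior critical point witnesses the centered normal form. For (1), I would invoke Lemma~\ref{lemma:targetautform}: the underlying form $r = \sabs{g}^2 - \snorm{p}^2$ is preserved (after normalizing $r(0,0) = 1$) by composition with any $\tau \in \Aut(\bB_N)$, and since the denominator $(1-\snorm{z}^2)^d$ of $\Lambda_f$ depends on $f$ only through its degree, the identity $\Lambda_{\tau \circ f} = \Lambda_f$ is immediate (using that the degree of a proper rational map is preserved under target automorphisms). For (2), the classical automorphism identity
\begin{equation*}
1 - \snorm{\varphi_\alpha(z)}^2 = \frac{(1-\snorm{\alpha}^2)(1-\snorm{z}^2)}{\sabs{1-\linnprod{z}{\alpha}}^2}
\end{equation*}
extracts a factor of $\sabs{1-\linnprod{z}{\alpha}}^{2d}$ from the denominator of $\Lambda_f \circ \varphi_\alpha$; multiplying the numerator $\sabs{g(\varphi_\alpha(z))}^2 - \snorm{p(\varphi_\alpha(z))}^2$ by the same factor yields the underlying form of $f \circ \varphi_\alpha$ times the squared modulus of a common polynomial factor, which a short coprimality argument (together with degree-preservation under automorphism composition) forces to be a positive constant, yielding the scalar $C$.

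The main obstacle is (3), uniqueness of the critical point when $d \geq 2$. For existence of an interior minimum, I would use that $r$ vanishes on $\partial \bB_n$ (since $f$ is proper), so $r = (1-\snorm{z}^2)\phi$ for some real-analytic $\phi \geq 0$ on $\overline{\bB_n}$, giving $\Lambda_f = \phi/(1-\snorm{z}^2)^{d-1}$, which blows up at the boundary for $d \geq 2$ and therefore attains a positive interior minimum. For uniqueness, the approach would be to show that the complex Hessian of $\log \Lambda_f$ at any critical point is positive-definite: the Bergman-style piece $-d \log(1-\snorm{z}^2)$ is strictly plurisubharmonic, while at a critical point the contribution of $\log r$ can be analyzed by using (2) to move the candidate critical point to the origin and Taylor-expanding in the coefficients of $p, g$, with the hypothesis $d > 1$ ensuring positive-definiteness of the total. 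Non-degeneracy of critical points, combined with an invariance-under-$\Aut$ argument that transports any second critical point to the origin, should then yield uniqueness.

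Part (4) then assembles the previous pieces. For the sufficient direction, let $\alpha$ be the unique critical point provided by (3). By (2), $\Lambda_{f \circ \varphi_\alpha}$ has a critical point at $\varphi_\alpha^{-1}(\alpha) = 0$. Choose $\tau \in \Aut(\bB_N)$ sending $(f \circ \varphi_\alpha)(0)$ to $0$; by (1), $\Lambda_{\tau \circ f \circ \varphi_\alpha} = \Lambda_{f \circ \varphi_\alpha}$ still has a critical point at $0$ while the map now fixes the origin. Taylor-expanding $\Lambda_{\tau \circ f \circ \varphi_\alpha} = (\sabs{g}^2 - \snorm{p}^2)/(1-\snorm{z}^2)^d$ around $0$ with $p(0) = 0$ and $g(0) = 1$ produces
\begin{equation*}
\Lambda_{\tau \circ f \circ \varphi_\alpha}(z,\bar z) = 1 + 2\Re\bigl(g_1(z)\bigr) + O(\snorm{z}^2),
\end{equation*}
where $g_1$ is the linear part of the denominator, so the critical-point condition at $0$ is exactly $g_1 = 0$, i.e., no linear terms in the denominator. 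The necessary direction runs the same chain of equivalences in reverse, and uniqueness from (3) pins down the particular $\alpha$ as the critical point.
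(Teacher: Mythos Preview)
The paper does not prove this lemma; it is quoted from \cite{Lnormal} with no argument given here, so there is no ``paper's own proof'' to compare against. I can only assess your sketch on its own merits.

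Your outlines of (1), (2), and (4) are correct. For (2) the automorphism identity together with invariance of the degree under composition with a source automorphism indeed forces the common factor to be a nonzero constant, and your Taylor computation in (4) correctly identifies the vanishing of the linear part $g_1$ of the denominator with criticality of $\Lambda$ at the origin.

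The genuine gap is in (3). After centering a critical point at $0$ and arranging $p(0)=0$, $g(0)=1$, $g_1=0$, the quadratic part of $\Lambda_f$ at the origin is
\[
d\,\snorm{z}^2-\snorm{p_1(z)}^2+2\Re g_2(z).
\]
Schwarz gives $\snorm{p_1(z)}\le\snorm{z}$, so the first two terms contribute at least $(d-1)\snorm{z}^2$; but $2\Re g_2$ is a harmonic real quadratic form that can be arbitrarily negative in some real directions unless you bound the coefficients of $g_2$. Your sentence ``the hypothesis $d>1$ ensuring positive-definiteness of the total'' is precisely the step that requires an a~priori estimate on $g_2$ coming from properness of $f$, and that estimate is one of the substantive results proved in \cite{Lnormal}, not something that falls out of the setup. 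Separately, even granting that every critical point is a nondegenerate local minimum, the phrase ``an invariance-under-$\Aut$ argument that transports any second critical point to the origin'' is not yet a proof of uniqueness: moving each critical point to the origin separately gives no contradiction. What you actually need is a mountain-pass (or Morse-theoretic) step---two strict local minima of a smooth exhaustion on a connected domain force a third critical point that is not a local minimum---and that should be stated rather than left implicit.
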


\begin{lemma} \label{lemma:l1thm1}
Suppose $f \colon \bB_n \to \bB_N$ is a rational proper map in normal form
and $f$ is not linear.  Then
$A_f \leq U(n) \oplus U(N)$ is a closed subgroup.
\end{lemma}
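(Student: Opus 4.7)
The plan is to exploit the $\Lambda_f$ function together with its unique critical point, as supplied by Lemma~\ref{lemma:Lnormal}, to force any element of $A_f$ to fix the origin in both source and target.

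I would start by letting $(\varphi,\tau) \in A_f$, so that $\tau \circ f = f \circ \varphi$. Combining parts~(1) and~(2) of Lemma~\ref{lemma:Lnormal} then gives
\begin{equation*}
\Lambda_f \circ \varphi = C\, \Lambda_{f \circ \varphi} = C\, \Lambda_{\tau \circ f} = C\, \Lambda_f
\end{equation*}
for some positive constant $C$. Since $f$ is not linear, the degree of $f$ is at least two and the remaining parts of Lemma~\ref{lemma:Lnormal} apply: part~(3) says $\Lambda_f$ has a unique critical point in $\bB_n$, and part~(4), applied with $\alpha=0$, identifies this critical point as the origin, because $f$ is already in normal form (so, with $\tau=\id$ and $\varphi_0(z)=-z$, the composition $\tau\circ f\circ\varphi_0$ still vanishes at the origin and has no linear terms in its denominator).

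The functional equation $\Lambda_f \circ \varphi = C\,\Lambda_f$, combined with the chain rule and the invertibility of $D\varphi$, shows that $z$ is a critical point of $\Lambda_f \circ \varphi$ if and only if $\varphi(z)$ is a critical point of $\Lambda_f$, while the critical points of $C\,\Lambda_f$ coincide with those of $\Lambda_f$. With only the origin available, $\varphi(0)=0$. An automorphism of $\bB_n$ that fixes the origin is unitary by~(\ref{e:auto}), so $\varphi \in U(n)$. Evaluating $\tau \circ f = f \circ \varphi$ at $z=0$ then yields $\tau(0) = f(\varphi(0)) = f(0) = 0$, which forces $\tau \in U(N)$ as well.

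Closedness of $A_f$ is already known from D'Angelo--Xiao~\cite{DX2}, so nothing new is needed there. I do not anticipate a serious obstacle: the only step that requires attention is the translation of $\Lambda_f \circ \varphi = C\Lambda_f$ into the permutation of critical-point sets, which is a routine chain-rule calculation, and the verification that the normal-form hypothesis places the unique critical point at the origin, which is where the non-linearity assumption is essential in order to invoke Lemma~\ref{lemma:Lnormal}.
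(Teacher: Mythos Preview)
Your argument is correct and follows essentially the same route as the paper: both use the $\Lambda_f$ function from Lemma~\ref{lemma:Lnormal}, identify the unique critical point as the origin via the normal-form hypothesis, deduce $\varphi(0)=0$ from the functional equation, and conclude that $\varphi$ and $\tau$ are unitary. The paper frames the first step slightly differently, passing through the underlying form $r$ and Lemma~\ref{lemma:targetautform} before invoking the $\Lambda$-function, whereas you apply parts~(1) and~(2) of Lemma~\ref{lemma:Lnormal} directly; the content is the same.
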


\begin{proof}
Suppose $f = \frac{p}{g}$ is a rational proper map in normal form that is
not linear and let
$r(z,\bar{z}) = \sabs{g(z)}^2-\snorm{f(z)}^2$.
Suppose that $(\varphi,\tau) \in A_f$, thus
$\tau \circ f \circ \varphi^{-1} = f$.
Write $f \circ \varphi^{-1}
=
F(z)
= \frac{P(z)}{G(z)}$ and
let $\tilde{r}(z,\bar{z}) = \sabs{G(z)}^2-\snorm{P(z)}^2$.
Assume that $\tilde{r}(0,0) = 1$.
By Lemma~\ref{lemma:targetautform}, we have that $\tilde{r}(z,\bar{z}) =
r(z,\bar{z})$.
Since $f$ of degree strictly greater than 1, Lemma \ref{lemma:Lnormal} shows $\Lambda_f$
has a unique critical point.
Since $\tilde{r} = r$, we find that $\Lambda_f= \Lambda_F$.
As $f$ is in normal form, the critical point of $\Lambda_f$ is at the origin.
Again from the previous lemma, $\Lambda_F = \Lambda_{f \circ \varphi^{-1}} = C
\Lambda_f \circ \varphi^{-1}$ for some constant $C$.  But this means that
$\varphi$ fixes this critical point, that is, $\varphi(0)=0$. An automorphism of the unit ball fixing the origin must be a unitary map (Corollary 1.6 of \cite{JPDrationalbook}), and hence $\varphi \in U(n)$.  Since $f$ is in normal form, fixes the
origin, and $\tau \circ f \circ \varphi^{-1} = f$, we find that $\tau$ also
fixes the origin and that $\tau \in U(N)$.

That $A_f$ is closed already follows from D'Angelo and Xiao \cites{DX1,DX2},
and it is also an immediate consequence of $A_f \leq U(n) \oplus U(N)$.
\end{proof}

In particular, the lemma above gives $\Gamma_f \leq U(n)$, but we can read
even more from the proof.
We thus have the following characterization of $\Gamma_f$, where no
rescaling is necessary.

\begin{lemma} \label{lemma:gammapreservesr}
Suppose $f = \frac{p}{g} \colon \bB_n \to \bB_N$ is a proper rational map
in normal form that is not linear.  Then
$U \in \Gamma_f \leq U(n)$ if and only if
\begin{equation}
\sabs{g(z)}^2-\snorm{p(z)}^2 \equiv
\sabs{g(Uz)}^2-\snorm{p(Uz)}^2 .
\end{equation}
In other words, $\Delta^{(*,*)}_f = \Gamma_f$.
Moreover, $\Gamma_f \leq \Delta^{(a,b)}_f$.
\end{lemma}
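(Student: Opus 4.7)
The plan is to deduce this lemma directly from Lemma~\ref{lemma:targetautform} together with the fact (already established by Lemma~\ref{lemma:l1thm1}) that $\Gamma_f \leq U(n)$ whenever $f$ is in normal form and not linear. The key observation is that composition on the source with a unitary map preserves both the ``normalization at $0$'' condition and the ``lowest terms'' condition required to invoke Lemma~\ref{lemma:targetautform}.

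In detail, suppose $U \in \Gamma_f$, so by definition there exists $\tau \in \Aut(\bB_N)$ with $\tau \circ f = f \circ U$. I would set $F = f \circ U$ and write $F = P/G$ where $P(z) = p(Uz)$ and $G(z) = g(Uz)$. Since $U$ is an invertible linear map, $P/G$ is in lowest terms because $p/g$ is. Because $f$ is in normal form, $g(0)=1$ and $p(0)=0$, hence also $G(0)=1$ and $P(0)=0$, so both underlying forms satisfy the normalization $\snorm{g(0)}^2 - \snorm{p(0)}^2 = 1$ and $\snorm{G(0)}^2 - \snorm{P(0)}^2 = 1$ required by Lemma~\ref{lemma:targetautform}. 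Applying that lemma then yields
\begin{equation*}
\sabs{g(z)}^2 - \snorm{p(z)}^2 \equiv \sabs{g(Uz)}^2 - \snorm{p(Uz)}^2.
\end{equation*}
Conversely, if this identity holds, then setting $P(z)=p(Uz)$ and $G(z)=g(Uz)$ one more application of Lemma~\ref{lemma:targetautform} (in the other direction) produces a $\tau \in \Aut(\bB_N)$ with $\tau \circ f = P/G = f \circ U$, so $U \in \Gamma_f$. This establishes both the characterization and the identity $\Delta^{(*,*)}_f = \Gamma_f$.

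For the final assertion $\Gamma_f \leq \Delta^{(a,b)}_f$, the argument is immediate: once we have the equality of the full underlying forms, extracting the bidegree-$(a,b)$ component of each side yields exactly the condition defining $\Delta^{(a,b)}_f$.

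The only potential subtlety is checking that $p(Uz)/g(Uz)$ remains in lowest terms and that the normalization at the origin is unchanged; both are straightforward given that $U$ is a unitary (hence linear and invertible) map and that $f$ is in normal form. There is no serious obstacle — the lemma is essentially a direct corollary of Lemma~\ref{lemma:targetautform} once we know from Lemma~\ref{lemma:l1thm1} that the source automorphism can be taken to be unitary, and the new observation is just that no rescaling is required in this setting because normal form pins down the value of the underlying form at the origin.
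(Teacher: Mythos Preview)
Your proposal is correct and follows essentially the same route as the paper: both rely on Lemma~\ref{lemma:l1thm1} for $\Gamma_f \leq U(n)$ and on Lemma~\ref{lemma:targetautform} for the equivalence with the underlying form. The only cosmetic difference is in the forward direction: the paper uses that the target automorphism $V$ is already known to be unitary (from $A_f \leq U(n)\oplus U(N)$) and computes $\snorm{Vp(Uz)}^2 = \snorm{p(Uz)}^2$ directly, whereas you invoke Lemma~\ref{lemma:targetautform} in both directions---which is arguably cleaner, since it does not require knowing anything about $\tau$ beyond $\tau \in \Aut(\bB_N)$.
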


\begin{proof}
That $\Gamma_f \leq U(n)$ follows from the lemma above.
If $U \in \Gamma_f$, then there is a $V \in U(N)$ such that
$V \circ f \circ U = f$.  Plugging into the underlying form and clearing
denominators obtains
\begin{equation}
\sabs{g(U z)}^2-\snorm{p(U z)}^2 =
\sabs{g(U z)}^2-\snorm{V p(U z)}^2 =
\sabs{g(z)}^2-\snorm{p(z)}^2 .
\end{equation}
Conversely, suppose $\sabs{g(z)}^2-\snorm{p(z)}^2 \equiv
\sabs{g(Uz)}^2-\snorm{p(Uz)}^2$.  Then Lemma~\ref{lemma:targetautform}
says that $f = \psi \circ f \circ U$ for some automorphism $\psi$, which is
a unitary as $A_f \leq U(n) \oplus U(N)$.
\end{proof}

\begin{lemma} \label{lemma:DfSigmaf}
Suppose $f \colon \bB_n \to \bB_N$ is a proper rational map
of degree $d > 1$.
Then
$\Delta^{(2,0)}_f = \Sigma_f$ and
$\Delta^{(*,0)}_f = D_f$.
\end{lemma}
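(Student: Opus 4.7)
The plan is to compute the bidegree $(k,0)$ pieces of the underlying form $r(z,\bar z) = \sabs{g(z)}^2 - \snorm{p(z)}^2$ directly, using only the normal form assumption (so $g(0)=1$, $g$ has no linear terms, and $p(0)=0$), and then observe that this grading is preserved by the substitution $z \mapsto Uz$ for any $U \in U(n)$.

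\textbf{Step 1: The purely holomorphic part of $\snorm{p(z)}^2$ vanishes.} Since $p(0)=0$, each component $p_i$ has no constant term, and so neither does $\overline{p_i(z)}$. In the expansion of $p_i(z)\overline{p_i(z)}$, a term of bidegree $(k,0)$ can only arise by pairing a holomorphic term with the constant of $\overline{p_i}$, which does not exist. Hence
\begin{equation}
\bigl(\snorm{p(z)}^2\bigr)_{(*,0)} = 0,
\qquad \text{and in particular } \bigl(\snorm{p(z)}^2\bigr)_{(2,0)} = 0.
\end{equation}

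\textbf{Step 2: The purely holomorphic part of $\sabs{g(z)}^2$ is $g(z)$.} Expanding $\sabs{g(z)}^2 = g(z)\overline{g(z)}$ and using $\overline{g(0)}=1$, the only way to produce a bidegree $(k,0)$ term is to pair the degree-$k$ holomorphic piece of $g$ with the constant $1$ of $\overline{g}$. Thus the $(0,0)$-piece equals $1$, and the $(k,0)$-piece for $k\geq 1$ equals $g_k(z)$. Summing,
\begin{equation}
\bigl(\sabs{g(z)}^2\bigr)_{(*,0)} = 1 + g_2(z) + \cdots + g_{d-1}(z) = g(z),
\qquad \bigl(\sabs{g(z)}^2\bigr)_{(2,0)} = g_2(z),
\end{equation}
where the absence of a linear term in $g$ follows from the definition of normal form.

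\textbf{Step 3: Match the definitions.} Combining the two steps, $(r)_{(*,0)} = g(z)$ and $(r)_{(2,0)} = g_2(z)$. Replacing $z$ by $Uz$ preserves bidegree (since $U$ is linear), so applying Steps 1 and 2 to the form $\sabs{g(Uz)}^2 - \snorm{p(Uz)}^2$ gives the analogous identities with $g(Uz)$ and $g_2(Uz)$. Therefore $U \in \Delta^{(*,0)}_f$ if and only if $g(Uz) = g(z)$, which is the defining condition for $D_f$; and $U \in \Delta^{(2,0)}_f$ if and only if $g_2(Uz) = g_2(z)$, which is the defining condition for $\Sigma_f$.

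\textbf{Obstacles.} This lemma is essentially bookkeeping with the bidegree grading, and I do not anticipate a real obstacle. The only point requiring care is ensuring that the constant terms of $g$ and $p$ enter correctly in the expansion; these are precisely what the normal form controls, which is why the statement needs $f$ (implicitly) in normal form in order for $D_f$ and $\Sigma_f$ even to be defined.
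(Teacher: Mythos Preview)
Your proof is correct and follows essentially the same approach as the paper: both compute that the purely holomorphic part of the underlying form $r(z,\bar z)$ equals $g(z)$ (the paper phrases this via complexification and setting $\bar z = 0$, which is the same operation), and then read off the equivalence of the defining conditions. Your write-up is in fact slightly more explicit than the paper's, spelling out both directions and treating the $\Sigma_f$ case in parallel rather than leaving it as ``analogously.''
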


\begin{proof}
First, put $f = \frac{p}{g}$ into normal form where $g(0)=1$.
Complexify $\sabs{g(z)}^2-\snorm{p(z)}^2$, and note that if $U \in
\Delta^{(*,0)}_f$ then
\begin{equation}
g(Uz)\bar{g}(\overline{Uz})-p(Uz) \cdot \bar{p}(\overline{Uz})
=
g(z)\bar{g}(\bar{z})-p(z) \cdot \bar{p}(\bar{z}) .
\end{equation}
Now plug in $\bar{z}=0$ to find
\begin{equation}
g(Uz) = g(Uz)\bar{g}(0)-p(Uz) \cdot \bar{p}(0)
=
g(z)\bar{g}(0)-p(z) \cdot \bar{p}(0)
=
g(z) .
\end{equation}
Hence $U \in D_f$.  Next we note that
\begin{equation}
\bigl(\sabs{g(z)}^2-\snorm{p(z)}^2\bigr)_{(*,0)}
=
g(z)\bar{g}(0)-p(z) \cdot \bar{p}(0)
=
g(z) ,
\avoidbreak
\end{equation}
which implies that if $U \in \Delta^{(*,0)}_f$, then $U \in D_f$.

The statement for $\Sigma_f$ follows analogously by considering only
the quadratic terms.
\end{proof}

\begin{lemma}
Suppose $f \colon \bB_n \to \bB_N$ is a rational proper map in normal form
and $f$ is not linear.  Then
\begin{enumerate}[(i)]
\item
$G_f \leq \Gamma_f \leq D_f \leq \Sigma_f \leq U(n)$
and
$\Gamma_f \leq \Delta^{(a,b)}_f$ 
are all closed subgroups.
\item
$H_f \leq U(N)$ and $T_f \leq U(N)$ are closed subgroups.
\end{enumerate}
\end{lemma}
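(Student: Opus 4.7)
The plan is to reduce every claim in (i) and (ii) to the three tools already established: Lemma~\ref{lemma:l1thm1} giving $A_f\leq U(n)\oplus U(N)$, Lemma~\ref{lemma:gammapreservesr} characterizing $\Gamma_f$ by invariance of the full underlying form $r$, and Lemma~\ref{lemma:DfSigmaf} identifying $D_f$ and $\Sigma_f$ with the $\Delta^{(*,0)}_f$ and $\Delta^{(2,0)}_f$ groups respectively. Once these identifications are in place, every inclusion becomes a statement about which bidegree components of $r$ a unitary $U$ preserves, and the chain of inclusions corresponds to a chain of increasingly weaker invariance conditions.

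For part (i), I would first note $G_f\leq\Gamma_f$ directly from the definitions (take $\tau=\operatorname{id}$). Next, for $\Gamma_f\leq\Delta^{(a,b)}_f$ (and in particular $\Gamma_f\leq\Delta^{(*,0)}_f=D_f$ and $\Gamma_f\leq\Delta^{(2,0)}_f$), I would invoke Lemma~\ref{lemma:gammapreservesr}: $U\in\Gamma_f$ means that $\sabs{g(Uz)}^2-\snorm{p(Uz)}^2$ equals $\sabs{g(z)}^2-\snorm{p(z)}^2$ identically, and equality of real-analytic polynomials forces equality of every bidegree component. For $D_f\leq\Sigma_f$, the definition $g\circ U=g$ implies equality of the degree-$2$ parts, and since $f$ is in normal form the degree-$2$ part of $g$ is $g_2$, giving $U\in\Sigma_f$. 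Finally, $\Sigma_f\leq U(n)$ is immediate from the definition.

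For part (ii), the argument for $H_f\leq U(N)$ uses the normal form hypothesis $f(0)=0$: if $\tau\circ f=f$, then $\tau(0)=\tau(f(0))=f(0)=0$, and an automorphism of $\bB_N$ fixing the origin is unitary. For $T_f\leq U(N)$, I would note that $\tau\in T_f$ means $(\varphi,\tau)\in A_f$ for some $\varphi$; Lemma~\ref{lemma:l1thm1} gives $A_f\leq U(n)\oplus U(N)$, so $\tau\in U(N)$.

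Closedness in each case is essentially automatic: $D_f$, $\Sigma_f$, and $\Delta^{(a,b)}_f$ are cut out of $U(n)$ by finitely many polynomial equations in the matrix entries of $U$; $G_f$, $\Gamma_f$, $H_f$, and $T_f$ are closed by D'Angelo--Xiao~\cites{DX1,DX2}, and alternatively closedness of the first three also follows at once from the inclusions into the compact groups $U(n)$ or $U(N)$ together with being defined by continuous conditions. I expect no real obstacle here; the only place to be careful is remembering that the normal form hypothesis $f(0)=0$ is what makes the $H_f$ argument work, and that Lemma~\ref{lemma:DfSigmaf} requires only $d>1$, which is guaranteed by the assumption that $f$ is not linear.
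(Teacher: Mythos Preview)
Your proposal is correct and follows essentially the same approach as the paper's proof: both reduce the unitary containments to Lemma~\ref{lemma:l1thm1} (viewing $G_f$, $H_f$ as subgroups of $A_f$ and $\Gamma_f$, $T_f$ as its projections), derive $\Gamma_f\leq D_f$ via Lemma~\ref{lemma:DfSigmaf}, cite Lemma~\ref{lemma:gammapreservesr} for $\Gamma_f\leq\Delta^{(a,b)}_f$, and handle closedness either by D'Angelo--Xiao or by the invariant-polynomial description. Your direct argument for $H_f\leq U(N)$ using $f(0)=0$ is a minor expository variation on the paper's observation that $H_f$ embeds in $A_f$ via $\tau\mapsto(\operatorname{id},\tau)$, but the content is the same.
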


\begin{proof}
The containment in the unitary follows since $A_f \leq U(n) \oplus U(N)$.
That $A_f$, $\Gamma_f$, $G_f$, $T_f$, and $H_f$ are closed
was proved by D'Angelo and Xiao in their work, and it also follows rather
quickly once they are subgroups of the unitary group.
That $D_f$, $\Sigma_f$, $\Delta^{(a,b)}_f$ are closed follows as they are given
by an invariant polynomial.
The groups $\Gamma_f$, $G_f$, $T_f$ and $H_f$ are either equivalent to
subgroups of $A_f$
or the projections onto the first or the second factor.  In either case, it
follows that they are all subgroups of the correct unitary groups.

The inclusions $G_f \leq \Gamma_f$,  $\Gamma_f \leq
\Delta^{(a,b)}_f$, and
$D_f \leq \Sigma_f$ follow immediately.
That $\Gamma_f \leq D_f$ follows from Lemma~\ref{lemma:DfSigmaf}.
\end{proof}

Theorem~\ref{thm1} follows from the lemmas above.


\section{Characterization of \texorpdfstring{$\Gamma_f$}{Gammaf}} \label{section:charGammaf}

We prove Theorem~\ref{thm:char} in the following two lemmas.
First, we observe that Lemma \ref{lemma:gammapreservesr} quickly yields the first part of Theorem~\ref{thm:char}.

\begin{lemma}
Suppose that $f \colon \bB_n \to \bB_N$ is a rational proper map in normal
form and $f$ is not linear.  Then $\Gamma_f \leq U(n)$ is a subgroup that is
defined by a single invariant polynomial.
\end{lemma}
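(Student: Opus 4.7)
The plan is to read off the defining polynomial directly from Lemma~\ref{lemma:gammapreservesr}. That lemma, applied to $f$ in normal form and not linear, characterizes $\Gamma_f$ as precisely the unitaries $U \in U(n)$ such that the underlying form
\begin{equation*}
r(z,\bar{z}) := \sabs{g(z)}^2 - \snorm{p(z)}^2
\end{equation*}
satisfies $r(Uz,\overline{Uz}) = r(z,\bar{z})$. Since $p$ and $g$ are polynomials in $z$, the form $r$ is a polynomial in $(z,\bar{z})$, so setting $\rho := r$ exhibits $\Gamma_f$ as the stabilizer in $U(n)$ of the single polynomial $\rho$, which is exactly the claim.

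There is essentially no obstacle left at this stage; the real content has been distributed across the earlier lemmas of this section. Specifically, Lemma~\ref{lemma:targetautform} converts the existence of a target automorphism $\tau$ with $\tau \circ f = f \circ \varphi^{-1}$ into equality of the underlying forms associated to $f$ and $f \circ \varphi^{-1}$; Lemma~\ref{lemma:l1thm1} then guarantees that the source automorphism $\varphi$ must itself be unitary, so that the substitution $z \mapsto Uz$ is meaningful and the underlying form of $f \circ U^{-1}$ is simply $r(U^{-1}z, \overline{U^{-1}z})$; Lemma~\ref{lemma:gammapreservesr} combines these into the invariance statement used above. Once $\rho = r$ is named, the conclusion is immediate, and the polynomial is manifestly invariant under the group it defines, justifying the phrasing ``defined by a single invariant polynomial.''
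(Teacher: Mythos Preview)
Your proof is correct and follows the same approach as the paper: both apply Lemma~\ref{lemma:gammapreservesr} and take $\rho(z,\bar{z}) = \sabs{g(z)}^2 - \snorm{p(z)}^2$ as the single invariant polynomial defining $\Gamma_f$. Your additional paragraph tracing the logical dependencies through Lemmas~\ref{lemma:targetautform} and~\ref{lemma:l1thm1} is accurate but not strictly needed, since Lemma~\ref{lemma:gammapreservesr} already packages those ingredients.
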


\begin{proof}
Let $f=\frac{p}{g}$ and
apply Lemma~\ref{lemma:gammapreservesr}.  In particular,
if $\rho(z,\bar{z}) = \sabs{g(z)}^2-\snorm{p(z)}^2$, then
the lemma implies that
\begin{equation}
\Gamma_f = \bigl\{ U \in U(n) : \rho(Uz,\overline{Uz}) = \rho(z,\bar{z})
\bigr\} .
\qedhere
\end{equation}
\end{proof}

We now prove the second part of Theorem~\ref{thm:char}. 

\begin{lemma}
Given any group $\Gamma \leq U(n)$ that is defined by finitely many
invariant polynomials, there exists a polynomial proper map
$f \colon \bB_n \to \bB_N$ such that $f(0)=0$ and $\Gamma_f = \Gamma$.
\end{lemma}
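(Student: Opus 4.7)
My plan is to apply Lemma~\ref{lemma:gammapreservesr}: for a polynomial $f$ with $f(0)=0$ (which has $g\equiv 1$ in the normal-form sense), $\Gamma_f$ is exactly the $U(n)$-stabilizer of the real polynomial $\snorm{f(z)}^2$. The problem therefore reduces to constructing a polynomial $f\colon\C^n\to\C^N$ with $f(0)=0$, $\snorm{f(z)}^2\equiv 1$ on $\partial\bB_n$, and whose squared norm has $U(n)$-stabilizer exactly $\Gamma$. I would use the ansatz
\[
\snorm{f(z)}^2 \;=\; \snorm{z}^{2d}\;-\;\epsilon(1-\snorm{z}^2)\snorm{P(z)}^2,
\]
where $\epsilon>0$ is small and $P\colon\C^n\to\C^M$ is a homogeneous polynomial of degree $d\geq 2$ that is $\Gamma$-equivariant with respect to a faithful unitary representation $\phi\colon\Gamma\to U(M)$ (such $\phi$ exists because $\Gamma$ is a compact Lie group). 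The properties $f(0)=0$ and $\snorm{f}^2\equiv 1$ on $\partial\bB_n$ are immediate; and since $\snorm{z}^{2d}$ and $1-\snorm{z}^2$ are $U(n)$-invariant, the identity $\snorm{f(Uz)}^2=\snorm{f(z)}^2$ is equivalent to $\snorm{P(Uz)}^2=\snorm{P(z)}^2$, so $\Gamma_f$ coincides with the $U(n)$-stabilizer of $\snorm{P(z)}^2$.

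Next I would verify that the right-hand side of the ansatz is a Hermitian sum of squares, so that such an $f$ genuinely exists. Because $P$ is homogeneous of degree $d$, the coefficient matrix has non-zero entries only in the bidegree blocks $(d,d)$ and $(d+1,d+1)$. The $(d,d)$-block equals the positive diagonal multinomial matrix of $\snorm{z}^{2d}$ minus $\epsilon$ times the Gram matrix of the components of $P$ in the degree-$d$ monomial basis; this is positive semidefinite for $\epsilon$ small. The $(d+1,d+1)$-block equals $\epsilon$ times the (manifestly PSD) Gram matrix of $\{z_iP_j\}$. A Cholesky-type decomposition of each block then yields the components of a polynomial $f$ (of degrees $d$ and $d+1$) realizing the ansatz.

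It remains to choose $P$ so that the $U(n)$-stabilizer of $\snorm{P(z)}^2$ equals $\Gamma$ and nothing larger. I would build $P$ by Haar-averaging a polynomial seed $q\colon\C^n\to\C^M$ of degree $d$ against $\phi$: set $P(z):=\int_\Gamma\phi(U)^{-1}q(Uz)\,dU$. The result is automatically $\Gamma$-equivariant, so $\Gamma\leq\Gamma_{\snorm{P}^2}$. The point of the argument is to upgrade this to equality, by taking $d$ large and $q$ rich enough that the bidegree decomposition of $\snorm{P(z)}^2$ contains (as distinct bidegree pieces, after possibly multiplying components by high powers of $\snorm{z}^2$ to invoke a Quillen--D'Angelo positivity stabilization) the defining invariants $\rho_1,\ldots,\rho_k$. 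Once the $\rho_j$'s are recoverable from $\snorm{P}^2$ via its bidegree decomposition, any $U\in U(n)$ preserving $\snorm{P}^2$ must preserve each bidegree piece, hence each $\rho_j$, and therefore lies in $\Gamma$.

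The main obstacle is this last step: arranging the averaging and seed choice so that $\snorm{P(z)}^2$ encodes the defining invariants in a form detectable by $U(n)$-stabilizers, rather than sharing its stabilizer with some strictly larger compact subgroup. This hinges on the invariant-theoretic fact that the ring of $\Gamma$-invariant polynomials in $z,\bar z$ is finitely generated and separates $\Gamma$-orbits, together with a careful choice of $\phi$ and $q$ to guarantee that the degree-$d$ component of the resulting equivariant subspace already sees each $\rho_j$; if needed, one may also adjoin extra components via the tensor-on-a-subspace and juxtaposition constructions of D'Angelo to break residual symmetries while preserving properness, and then absorb the whole thing into a single polynomial $f$ of the shape above.
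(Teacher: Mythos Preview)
Your framework --- perturb a $U(n)$-invariant Hermitian sum of squares by a small $(1-\snorm{z}^2)$-multiple of something whose $U(n)$-stabilizer is exactly $\Gamma$ --- is the same as the paper's. But you make the crucial step much harder than it needs to be, and you do not actually close the gap you yourself flag as ``the main obstacle.''

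The difficulty comes from your insistence that the perturbing term be a Hermitian square $\snorm{P(z)}^2$ with $P$ $\Gamma$-equivariant. This forces you into a roundabout construction (Haar averaging a seed against a faithful representation) and then into the genuinely hard problem of showing that the stabilizer of $\snorm{P}^2$ is not strictly larger than $\Gamma$. Your sketch for this --- hoping that bidegree pieces of $\snorm{P}^2$ recover the $\rho_j$, invoking Quillen--D'Angelo stabilization, adjoining extra components --- is not an argument, and there is no reason a priori that the invariants $\rho_j$ (which are arbitrary real polynomials, not sums of squares) should appear inside a single $\snorm{P}^2$ in a stabilizer-detecting way.

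The paper sidesteps this entirely with two observations you are missing. First, the perturbation need not be a square: if $R(z,\bar z)=\frac{1}{d}\sum_{j=1}^d\snorm{z}^{2j}$ has positive definite coefficient matrix (off the constant row/column), then $R+\epsilon\rho(1-\snorm{z}^2)$ is still positive semidefinite for small $\epsilon$ and \emph{any} real polynomial $\rho$ of the right degree with no pure holomorphic or antiholomorphic terms. So one may take $\rho$ to be built directly from the given invariants $\rho_1,\ldots,\rho_\ell$, and the stabilizer question becomes trivial. Second, to combine several invariants into one with the same joint stabilizer, the paper uses a degree-separation trick: set $\rho=\snorm{z}^2\sum_j\snorm{z}^{2k_j}\rho_j$ with $k_j=d_1+\cdots+d_j$, so that the summands occupy disjoint degree ranges and any $U\in U(n)$ preserving $\rho$ must preserve each $\rho_j$ separately. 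This replaces your entire averaging/representation apparatus with a two-line computation.
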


We note that the $N$ required depends on the inviariant polynomials given.

\begin{proof}
Suppose that $\Gamma \leq U(n)$ is the group given by the invariant polynomials
$\rho_1,\ldots,\rho_\ell$:
\begin{equation}
\Gamma = \bigl\{ U \in U(n) : \rho_j(Uz,\overline{Uz}) = \rho_j(z,\bar{z}),
j=1,\ldots,\ell
\bigr\} .
\end{equation}
We can ensure that $\rho_j(0)=0$ for each $j$.
We will construct a single polynomial that defines $\Gamma$.  Suppose that
$\rho_j$ is of bidegree $(d_j,d_j)$.  Write $k_j = d_1+d_2+\cdots+d_j$
and construct
\begin{equation}
\rho(z,\bar{z})
=
\snorm{z}^2
\sum_{j=1}^\ell
\snorm{z}^{2k_j}
\rho_j(z,\bar{z}) .
\end{equation}
Note that $\snorm{Uz}^{2} = \snorm{z}^2$ for any unitary $U$, and
that each polynomial
$\snorm{z}^{2k_j}
\rho_j(z,\bar{z})$ only has monomials of degree $2k_{j-1}+1$ through
$2_{k_j}$.  In particular, no two
$\snorm{z}^{2k_j} \rho_j(z,\bar{z})$ have monomials of the same degree.
Therefore, $\rho(Uz,\overline{Uz}) = \rho(z,\bar{z})$ if and only if
$\rho_j(Uz,\overline{Uz}) = \rho_j(z,\bar{z})$ for each $j$.
The first factor $\snorm{z}^2$ ensures that $\rho$ has no holomorphic or
antiholomorphic terms.
Suppose that $\rho$ is of bidegree $(d,d)$.  Consider
\begin{equation}
R(z,\bar{z})
=
\sum_{j=1}^d
\frac{1}{d} \snorm{z}^{2j} .
\end{equation}
We have that $R(z,\bar{z})=1$ when $\snorm{z}^2=1$ and moreover its matrix
of coefficients is positive definite as it is a sum of squares of
holomorphic polynomials.  These polynomials thus give a polynomial proper map
of balls.  For a small $\epsilon > 0$ write
\begin{equation}
R_\epsilon(z,\bar{z})
= R(z,\bar{z}) + \epsilon \rho(z,\bar{z}) (1-\snorm{z}^2) .
\end{equation}
Note that the matrix of coefficients of $R_\epsilon$ has zeros at all
the entries for holomorphic or antiholomorphic terms.  Moreover, except for
the constant, the diagonal terms of the matrix for $R$ are positive and at least
$\frac{1}{d}$.  Thus, for a small enough $\epsilon$, the matrix
for $R_\epsilon$ is still positive definite and hence a sum of squares of
holomorphic polynomials
\begin{equation}
R_\epsilon(z,\bar{z}) =
\sabs{f_1(z)}^2+\cdots+\sabs{f_N(z)}^2 .
\end{equation}
Since the matrix of coefficients is zero at the entries for holomorphic
and antiholomorphic terms, the polynomials $f_j$ can be picked so that $f_j(0)=0$ for
all $j$.
As $R_\epsilon = 1$ when $\snorm{z}=1$, we have a proper map of balls
$f=(f_1,\ldots,f_N)$.
We have that $U \in \Gamma_f$ if and only if
\begin{multline}
R_\epsilon(Uz,\overline{Uz})
= R(Uz,\overline{Uz}) + \epsilon \rho(Uz,\overline{Uz}) (1-\snorm{Uz}^2) 
\\
= R(z,\overline{z}) + \epsilon \rho(Uz,\overline{Uz}) (1-\snorm{z}^2) 
= R_\epsilon(z,\bar{z}) .
\end{multline}
That is, $U \in \Gamma_f$ if and only if $\rho(Uz,\overline{Uz})=\rho(z,\bar{z})$
and that defines $\Gamma$.  The map $f$ is the desired map.
\end{proof}


\section{Constructions} \label{section:constructions}

The following basic result is useful for constructing maps with
a given denominator.  The result we prove below is a straightforward
construction that gives a numerator of a specific degree, for denominators
close to 1.
There is a far deeper result, see 
D'Angelo~\cite{JPDrationalbook} or
Catlin--D'Angelo~\cite{CatlinDAngelo},
that any polynomial that does not vanish on the closed ball
is a denominator of a proper map of balls.  In that case however,
the degree cannot be bounded.

\begin{prop} \label{prop:constructwithdenom}
Given any polynomial $G \colon \C^n \to \C$ of degree $d-1$ such that
$G(0)=0$, there exists an $\epsilon > 0$, $N \in \N$,
and a polynomial $P \colon \C^n \to \C^N$ of degree $d$
such that $P(0)=0$ and $\frac{P}{1+\epsilon G}$ is a proper map
of $\bB_n$ to $\bB_N$.  The $N$ can be taken to be one less than
the number of different monomials of degree $d$ or less in $n$ variables.
\end{prop}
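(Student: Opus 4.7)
The plan is to adapt the sum-of-squares construction used in the previous lemma so that on the sphere $\|z\|^2=1$ the squared norm of the numerator matches $|1+\epsilon G(z)|^2$ rather than the constant $1$. A map $f=P/(1+\epsilon G)$ is proper from $\bB_n$ to $\bB_N$ precisely when $\|P(z)\|^2=|1+\epsilon G(z)|^2$ on $\|z\|^2=1$ and $\|P(z)\|^2<|1+\epsilon G(z)|^2$ on the open ball (assuming $1+\epsilon G$ does not vanish on the closed ball, which holds for small $\epsilon$). I will manufacture such a $P$ by producing a real polynomial $R_\epsilon(z,\bar z)$ equal to $|1+\epsilon G|^2$ on the sphere whose coefficient matrix is positive definite and has zero entries in its constant row and column.

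First, expand $|1+\epsilon G(z)|^2 = 1 + \epsilon\phi(z,\bar z)$ with $\phi = G+\bar G+\epsilon|G|^2$, and decompose $G=\sum_{k=1}^{d-1}G_k$ into homogeneous components (no constant piece, since $G(0)=0$). On the sphere I may replace each purely holomorphic term $G_k(z)$ by $G_k(z)\|z\|^{2(d-k)}$, and similarly for its conjugate. This yields
\begin{equation*}
\psi(z,\bar z) = \sum_{k=1}^{d-1}\bigl(G_k(z)+\overline{G_k(z)}\bigr)\|z\|^{2(d-k)} + \epsilon|G(z)|^2,
\end{equation*}
which agrees with $\phi$ on $\|z\|^2=1$, has neither purely holomorphic nor purely antiholomorphic monomials (each surviving term has both holomorphic and antiholomorphic degree at least $1$), and satisfies $\psi(0,0)=0$. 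Using the positive baseline from the previous lemma, $R(z,\bar z)=\tfrac{1}{d}\sum_{j=1}^{d}\|z\|^{2j}$, set $R_\epsilon = R + \epsilon\psi$. Then $R_\epsilon|_{\|z\|=1}=|1+\epsilon G|^2|_{\|z\|=1}$, $R_\epsilon(0,0)=0$, and $R_\epsilon$ has no pure holomorphic or antiholomorphic terms.

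Now I apply the Hermitian-form apparatus of Section~\ref{section:hermitian}. The coefficient matrix of $R$ is diagonal in the monomial basis, supported on monomials of degree $1$ through $d$, with diagonal entries $\tfrac{1}{d}\binom{|\alpha|}{\alpha} \geq \tfrac{1}{d}$. The matrix of $\epsilon\psi$ is supported in the same block of degrees $1$ through $d$ and is a bounded perturbation of the diagonal. For $\epsilon>0$ small enough it stays positive definite on its support, and because its constant row and column are zero, the standard decomposition gives $R_\epsilon(z,\bar z)=\sum_{j=1}^{N}|P_j(z)|^2$ with $P_j(0)=0$, $\deg P_j \leq d$, and $N$ equal to the rank, hence bounded by the number of monomials of degree between $1$ and $d$ in $n$ variables, i.e., one less than the number of monomials of degree at most $d$.

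It remains to verify that $f=P/(1+\epsilon G)$ is proper, not merely sphere-preserving. Writing $1-R=(1-\|z\|^2)q(\|z\|^2)$ with $q(t)=\tfrac{1}{d}\sum_{j=1}^{d}(1+t+\cdots+t^{j-1})$, so $q(t)\geq q(0)=1$ on $[0,1]$, and noting that $\phi-\psi=\sum_k(G_k+\overline{G_k})(1-\|z\|^{2(d-k)})$ vanishes on the sphere so $\phi-\psi=(1-\|z\|^2)r(z,\bar z)$ for a polynomial $r$, we obtain
\begin{equation*}
|1+\epsilon G(z)|^2-\|P(z)\|^2 = (1-R)+\epsilon(\phi-\psi) = (1-\|z\|^2)\bigl(q(\|z\|^2)+\epsilon r(z,\bar z)\bigr),
\end{equation*}
which for small enough $\epsilon$ is strictly positive on $\bB_n$ and vanishes exactly on the boundary. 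Since $|1+\epsilon G|$ is also bounded away from $0$ on $\overline{\bB_n}$ for small $\epsilon$, the map $f$ is defined and continuous on $\overline{\bB_n}$, sends $\partial\bB_n$ to $\partial\bB_N$, and is therefore proper. The principal obstacle is that both the positive definiteness of the coefficient matrix of $R_\epsilon$ and the positivity of $q+\epsilon r$ on the closed ball are genuine perturbation estimates; however, each rests on a strictly positive, $G$-independent baseline (minimum diagonal entry $\geq 1/d$ and $q\geq 1$ respectively), so a single threshold $\epsilon_0$ depending only on $G$ works for both.
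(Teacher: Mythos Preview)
Your argument is correct and runs parallel to the paper's, but with a somewhat different technical packaging. The paper builds the form $r(z,\bar z)=\epsilon\bigl(G+\bar G\bigr)(1-\|z\|^2)-R+1$, argues that its coefficient matrix has signature $(1,N)$ for small $\epsilon$, and then shows $r-|1+\epsilon G|^2$ is negative semidefinite with vanishing constant row and column, yielding $r=|1+\epsilon G|^2-\|P\|^2$. You instead construct the numerator form $R_\epsilon=R+\epsilon\psi$ directly, show it is positive definite on the block of monomials of degree $1$ through $d$, and write $R_\epsilon=\|P\|^2$; your explicit factorization $|1+\epsilon G|^2-\|P\|^2=(1-\|z\|^2)\bigl(q(\|z\|^2)+\epsilon r\bigr)$ then gives properness by a bare-hands estimate rather than an appeal to the maximum principle. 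Both routes rest on the same perturbation idea; yours avoids the signature bookkeeping at the cost of homogenizing $G_k$ by $\|z\|^{2(d-k)}$ instead of the paper's simpler $\|z\|^2$.

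Two small points you should add. First, the proposition asserts $\deg P=d$, not merely $\deg P\le d$; this follows because your coefficient matrix is positive \emph{definite} on the full degree-$1$-through-$d$ block, so the $N$ linearly independent components $P_j$ span that space and some $P_j$ must contain a degree-$d$ monomial. Second, the paper goes on to verify that $P/(1+\epsilon G)$ is in lowest terms (by examining the top bidegree $-\tfrac{1}{d}\|z\|^{2d}$ of $r$); this is not literally part of the stated proposition, but it is what makes the construction useful downstream, since otherwise $1+\epsilon G$ need not be the actual denominator of the map in normal form.
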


A proof for $d=3$ was given in \cite{Lnormal} and must be somewhat modified
for higher degree.

\begin{remark}
It may be convenient in some constructions to consider $1+G(\epsilon z)$ instead of
$1+\epsilon G(z)$, and the proof follows in exactly the same way.  That is, given
an affine variety, there is a proper map with this variety as the pole set provided we can
we can dilate it sufficiently far away from the origin.
\end{remark}

\begin{proof}
One starts with the polynomial
\begin{equation} 
R(z,\bar{z}) = \sum_{j=1}^d \frac{1}{d} \snorm{z}^{2j} .
\end{equation}
If we ignore the row and column corresponding to purely holomorphic and
purely antiholomorphic terms (which is a single row and column), its
matrix of coefficients is positive definite.  
Set $N$ to be the rank of this matrix, which is one less than 
the number of all holomorphic monomials in $n$ variables of degree $d$ or less.
Moreover $R=1$ when $\snorm{z}=1$.
Now consider
\begin{equation}
r(z,\bar{z}) = \epsilon \bigl(G(z)+\overline{G(z)}\bigr)\bigl(1-\snorm{z}^2\bigr)
-R(z,\bar{z}) + 1.
\end{equation}
The rank of this matrix is $N+1$, as we will have a $1$ on the coefficient
of the matrix for the constant term, the off diagonal elements are all
small if $\epsilon > 0$ is small, and all other diagonal elements are negative
and of size roughly $\frac{1}{d}$ or larger (assuming $\epsilon$ is small).
Thus, for a small $\epsilon$, the matrix of coefficient will have 1 positive
and $N$ negative eigenvalues.
The matrix of coefficients for
\begin{equation}
r(z,\bar{z})-\sabs{1+\epsilon G(z)}^2
\end{equation}
has zeros at all the terms corresponding to the pure holomorphic and pure
antiholomorphic terms, and hence is of rank $N$ and therefore negative
semidefinite.  In particular, there exists a polynomial map $P \colon \C^n
\to \C^N$ such that
\begin{equation}
r(z,\bar{z}) = \sabs{1+\epsilon G(z)}^2-\snorm{P(z)}^2 .
\end{equation}
The degree of $P$ is at most $d$ as that is the maximal degree of all
monomials, and as there were no holomorphic or antiholomorphic terms in the
matrix from which we constructed $P$, we can chose $P$ to not include the
constant monomial, and hence $P(0)=0$.  In fact, since 
$\snorm{1+\epsilon G(z)}^2$ has no terms of bidegree $(d,d)$ and $r$ does,
we must conclude that $P$ is in fact of degree $d$ and not less.
Since we also get that $r(z,\bar{z})=0$ when $\snorm{z}=1$, we find that
$\frac{P}{1+\epsilon G}$ is a proper map.

It is left to show that the map is in lowest terms.  If not, that is,
if there was a common multiple $h$ of the components of $P$ and $1+\epsilon G$,
then we would have $r(z,\bar{z}) = \sabs{h(z)}^2 A(z,\bar{z})$ for some
real polynomial $A$.
Consider the top degree part of $r$,
that is, we look at the bidegree $(d,d)$ part of $r$ which is simply
\begin{equation}
r(z,\bar{z})_{(d,d)} = - \frac{1}{d} \snorm{z}^{2d} .
\end{equation}
The function $h$, were it nonconstant, would be zero at some points arbitrarily far
away from the origin, while $r_{(d,d)}$ and hence $r$ must become
arbitrarily negative as $\snorm{z}$ becomes large.  So $h$ is a constant
and $\frac{P}{1+\epsilon G}$ is in lowest terms.
\end{proof}

Using the previous proposition, we can
construct a map of degree 4 with denominator
\begin{equation} \label{eq:trivdenom}
1+\sigma_1 z_1^2 + \cdots + \sigma_n z_n^2 + \epsilon_1
z_1^3+\cdots+\epsilon_n z_n^3 ,
\end{equation}
which then necessarily has the trivial group $D_f = \{ I \}$, and hence
$\Gamma_f = \{ I \}$.

If the denominator is of the form
$1+\sigma_1 z_1^2 + \cdots + \sigma_n z_n^2$,
then $D_f = \{ I , -I \}$.  If we
want $G_f$ to also be $\{ I, -I \}$, then
we would need the numerator to be invariant, but that requires degree 4, see Lemma~\ref{lemma:Gfgenericsigma}.
For
small $\sigma$ we can always construct such a
degree 4 map.

\begin{prop} \label{prop:constructZ2}
\pagebreak[2]
Given $n \in \N$, there exists an $N$ and an $\epsilon > 0$
such that whenever $0 < \sigma_1 < \cdots < \sigma_n < \epsilon$
then there exists a rational proper map of balls of degree 4 in normal form
$f=\frac{p}{g} \colon \bB_n \to \bB_N$ (in lowest terms) such that
\begin{equation}
g(z) = 1 + \sum_{j=1}^n \sigma_j z_j^2
\avoidbreak
\end{equation}
and such that $p(z)=p(-z)$.  In other words, $G_f = \{ I, -I \}$.
\end{prop}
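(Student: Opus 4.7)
The plan is to re-run the construction from Proposition~\ref{prop:constructwithdenom}, but to restrict everything to polynomials whose monomials have even total degree so that the resulting numerator automatically satisfies $p(-z)=p(z)$. First set $g=1+g_2$ with $g_2(z)=\sum_j\sigma_jz_j^2$, observe that $g$ is already invariant under $z\mapsto -z$, and replace the auxiliary polynomial $\sum_{j=1}^d\frac{1}{d}\snorm{z}^{2j}$ used there by its even analogue $R(z,\bar z)=\tfrac12\snorm{z}^4+\tfrac12\snorm{z}^8$, which still equals $1$ on the sphere and only involves holomorphic monomials of degrees $2$ and $4$. I would then take the underlying form to be
\[
r(z,\bar z)=1-R(z,\bar z)+(g_2+\overline{g_2})(1-\snorm{z}^4),
\]
which by inspection vanishes on $\snorm{z}=1$, satisfies $r(0,0)=1$ and has $(*,0)$-part equal to $g_2$, and whose every monomial has even bidegree.

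The key verification is that
\[
\snorm{p}^2:=\sabs{g}^2-r=\sabs{g_2}^2+(g_2+\overline{g_2})\snorm{z}^4+\tfrac12\snorm{z}^4+\tfrac12\snorm{z}^8
\]
has a positive semidefinite matrix of coefficients. The diagonal blocks coming from $\tfrac12\snorm{z}^4+\sabs{g_2}^2$ and $\tfrac12\snorm{z}^8$ are uniformly positive definite in the monomial basis (the first is a positive multiple of the identity plus a rank-one PSD piece, the second is diagonal with positive multinomial coefficients), while the off-diagonal $(4,2)$ block coming from $g_2\snorm{z}^4$ has norm of order $\sigma_n$. A Schur-complement estimate then gives positive definiteness provided all $\sigma_j$ lie below an $n$-dependent threshold $\epsilon$. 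Since the matrix has zero entries at the constant and at every pure holomorphic or antiholomorphic position, and since its support sits inside the even-degree block, Section~\ref{section:hermitian} produces $p=(p_1,\dots,p_N)$ with $p_j(0)=0$, each $p_j$ built only from even-degree monomials, and $\snorm{p}^2=\sum_j\sabs{p_j}^2$. Thus $p(0)=0$ and $p(-z)=p(z)$.

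To finish I would collect the consequences. Since $\sabs{g}^2=\snorm{p}^2$ on $\snorm{z}=1$ and $g\neq 0$ on $\overline{\bB_n}$ (immediate once $\sigma_n<1$), $f=p/g$ is a proper rational map $\bB_n\to\bB_N$. The bidegree $(4,4)$ piece $\tfrac12\snorm{z}^8$ survives in $\snorm{p}^2$, so $\deg p=4$ and hence $\deg f=4$. Evenness of both $p$ and $g$ gives $-I\in G_f$, and then the distinctness hypothesis together with Corollary~\ref{cor1} forces $G_f=\{I,-I\}$. Lowest terms is as in Proposition~\ref{prop:constructwithdenom}: the top bidegree part of $r$ is $-\tfrac12\snorm{z}^8$, so $r\to-\infty$ at infinity, which is incompatible with a nontrivial common factor vanishing on an unbounded variety (the $n=1$ case is handled directly by checking that the constructed $p$ does not vanish at the two roots of $g$).

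The main obstacle I expect is the positive-definiteness check: one must verify that removing the odd summands $\snorm{z}^2$ and $\snorm{z}^6$ from the auxiliary $R$ still leaves a strictly positive matrix on the span of degree-$2$ and degree-$4$ monomials, and then track the smallness of $\sigma$ needed so that the off-diagonal Schur perturbation stays inside the positive-definite cone. Everything else is essentially bookkeeping parallel to the proof of Proposition~\ref{prop:constructwithdenom}.
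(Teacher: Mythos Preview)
Your proposal is correct and follows essentially the same approach as the paper: the same even auxiliary form $R=\tfrac12\snorm{z}^4+\tfrac12\snorm{z}^8$, the same underlying form $r=1-R+(g_2+\overline{g_2})(1-\snorm{z}^4)$, and the same positivity-for-small-$\sigma$ argument (you phrase it via a Schur complement, the paper via diagonal dominance). One small caution: invoking Corollary~\ref{cor1} to pin down $G_f=\{I,-I\}$ is mildly circular, since the existence half of that corollary is exactly what this proposition supplies; it suffices to show $-I\in G_f$ here and leave the reverse containment to Lemma~\ref{lemma:Gfgenericsigma}.
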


We note that the construction in Example~\ref{example:explicit}
can be adapted to arbitrary $n$ and ensure the existence of such an
example whenever $\sigma_1^2+\cdots+\sigma_n^2 < 1$, so
$\epsilon = \frac{1}{\sqrt{n}}$ would suffice.
The advantage of the approach given here is that it easily generalizes to more
complicated denominators.

\begin{proof}
We adapt the proof of Proposition~\ref{prop:constructwithdenom} from above, however we need to work
with forms where only holomorphic and antiholomorphic monomials of even
degree arise.
Let $N$ be the number of monomials in $n$ variables of degree $2$ and $4$.
Thus start with
\begin{equation}
R(z,\bar{z}) = \frac{1}{2}\snorm{z}^4+\frac{1}{2}\snorm{z}^8 .
\end{equation}
Again $R=1$ if $\snorm{z}=1$.
Write $G(z) = \sum_{j=1}^n \sigma_j z_j^2$ and construct
\begin{equation}
r(z,\bar{z}) = \bigl( G(z)+\overline{G(z)}\bigr)\bigl(1-\snorm{z}^4\bigr)
-R(z,\bar{z}) + 1.
\end{equation}
Note the $\snorm{z}^4$ in the formula.  The matrix of coefficients has
nonzero rows and columns only for monomials of even degree, and taking this
submatrix we find a full rank matrix of rank $N+1$.
As the on diagonal elements are roughly of size $\frac{1}{2}$ or larger
as long as $\sigma$s are small, and all off diagonal elements are of size
proportional to the $\sigma$s, we find that the number of negative
eigenvalues is $N$ and there is one positive eigenvalue corresponding
the constant $1$.
The matrix of
$\sabs{1+G(z)}^2$ also only has nonzero rows and columns for the monomials
of even degree, and hence so does
\begin{equation}
r(z,\bar{z})-\sabs{1+G(z)}^2 ,
\end{equation}
whose matrix again has no elements for the row and column corresponding
to the constant.  Hence its rank is $N$ and it must be a negative
semidefinite matrix so there is a polynomial $p(z)$ only using
degree $2$ and $4$ monomials such that
\begin{equation}
r(z,\bar{z})=\sabs{1+G(z)}^2 -\snorm{p(z)}^2 .
\end{equation}
Again $r=0$ on $\snorm{z}=1$ and we are finished,
$\frac{p}{1+G}$ is the desired map.  It is in lowest terms by the same
argument as in
Proposition~\ref{prop:constructwithdenom}.
\end{proof}


\section{Group invariance of maps with generic \texorpdfstring{$\sigma$}{sigma}} \label{section:gensigmas}

We prove the first part of Corollary~\ref{cor1} in the next lemma.
The construction part of the corollary we have
already done in Proposition~\ref{prop:constructZ2}.

\begin{lemma}\label{lemma:Gfgenericsigma}
Suppose $f = \frac{p}{g} \colon \bB_n \to \bB_N$
is a rational proper map in normal form
such that $0 < \sigma_1 < \ldots < \sigma_n$.
Then $G_f$ is either trivial, or $G_f = \{ I, -I \}$, in which
case $p(z) = p(-z)$ and $g(z)=g(-z)$.  In particular,
if $G_f = \{ I, -I \}$ then the degree of $f$ is at least 4.
\end{lemma}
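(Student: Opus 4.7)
The plan is to chain the inclusions from Theorem~\ref{thm1}(ii), which give $G_f \leq \Gamma_f \leq D_f \leq \Sigma_f \leq U(n)$, with an explicit description of $\Sigma_f$ under the hypothesis $0 < \sigma_1 < \cdots < \sigma_n$, and then to combine this with Lichtblau's structure theorem for $G_f$ cited in the introduction. The heart of the argument is the identification of $\Sigma_f$.

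First I would show that when $0 < \sigma_1 < \cdots < \sigma_n$, we have $\Sigma_f \cong (\Z_2)^n$, consisting of all diagonal matrices with $\pm 1$ entries. Writing $g_2(z) = z^T \Lambda z$ with $\Lambda = \operatorname{diag}(\sigma_1,\ldots,\sigma_n)$, the defining condition $g_2 \circ U = g_2$ is $U^T \Lambda U = \Lambda$, i.e.\ $\sum_k \sigma_k u_{kj}^2 = \sigma_j$ for every $j$. Summing moduli and using unitarity of $U$ (both row and column norms equal $1$) gives
\[
\sum_j \sigma_j \;=\; \sum_j \Bigl|\sum_k \sigma_k u_{kj}^2\Bigr| \;\leq\; \sum_j \sum_k \sigma_k |u_{kj}|^2 \;=\; \sum_k \sigma_k,
\]
so every triangle inequality must be saturated. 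This forces, for each column $j$, the terms $\sigma_k u_{kj}^2$ to share a common argument and (since their sum is the positive real $\sigma_j$) to be real and nonnegative; hence all $u_{kj}$ are real and $U \in O(n)$. The relation $U^T \Lambda U = \Lambda$ then becomes $\Lambda U = U \Lambda$, and distinctness of the $\sigma_j$'s forces $U$ to be diagonal with $\pm 1$ entries.

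Next I would apply Lichtblau's theorem: $G_f$ is finite, cyclic, and fixed-point-free. The cyclic subgroups of $(\Z_2)^n$ are either trivial or of order $2$; for an order-$2$ subgroup to act fixed-point-freely on $\bB_n$, its generator (a diagonal $\pm 1$ matrix) must have no eigenvalue $+1$, leaving only $-I$. This yields the dichotomy $G_f \in \{\{I\},\{I,-I\}\}$.

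Finally, if $G_f = \{I,-I\}$, then $f(-z) = f(z)$; comparing $p(-z)/g(-z) = p(z)/g(z)$ in lowest terms with $g(0)=1$ forces $g(-z) = g(z)$ and $p(-z) = p(z)$, so $p$ and $g$ are both even polynomials. In the normal form one has $\deg g \leq d - 1 < d$, so $\deg f = \deg p$ must be even. Since the $\sigma_j$'s are positive, $g_2 \not\equiv 0$ and $\deg f \geq 3$, which combined with evenness gives $\deg f \geq 4$. The main obstacle is the explicit computation of $\Sigma_f$: the triangle-inequality saturation argument is the one place where the positivity and distinctness of all the $\sigma_j$'s are genuinely used, and the rest of the proof is essentially a bookkeeping combination of Theorem~\ref{thm1} with Lichtblau's theorem.
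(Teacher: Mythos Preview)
Your proof is correct and follows the same strategy as the paper's: use $G_f \leq \Sigma_f$, identify $\Sigma_f$ as the diagonal $\pm 1$ matrices, invoke Lichtblau's theorem to cut down to $\{I\}$ or $\{I,-I\}$, and then read off the parity and degree constraints. The only differences are that you supply an explicit argument (the triangle-inequality saturation) for the structure of $\Sigma_f$, which the paper simply asserts, and you deduce $g(-z)=g(z)$ via lowest-terms uniqueness rather than via the inclusion $G_f \leq D_f$.
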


\begin{proof}
First, $G_f \leq \Sigma_f$, and as the $\sigma$s are nonzero and distinct,
we have that $\Sigma_f$ is composed of diagonal matrices with $\pm 1$ on the
diagonal.
Via the theorem of Lichtblau~\cite{Li}, $G_f$ must be fixed-point-free and
cyclic, and the only such subgroups are $\{ I \}$ or $\{I, -I \}$.
Suppose that $G_f = \{ I, -I \}$.
As $G_f \leq D_f$, we have that $\{I,-I\} \leq D_f$ and so
$g(z)=g(-z)$.
We have
\begin{equation}
\frac{p(z)}{g(z)} = \frac{p(-z)}{g(-z)}
\quad\text{or}\quad
p(z)g(-z) = p(-z)g(z) .
\end{equation}
As $g(z)=g(-z)$ we find that $p(z)=p(-z)$.
Thus all the monomials that appear in $g$ and $p$ are of even degree.
The degree of $p$ cannot be $2$ as normal form implies that $\deg(g) <
\deg(p)$, hence $\deg(p) \geq 4$.
\end{proof}

It is worth remarking that the condition $G_f=\{I, -I\}$ immediately gives that $f$ is even, but in the Corollary, we further prove that the numerator and denominator must also be even for ball maps.

The second part of the corollary now follows
from Proposition~\ref{prop:constructZ2}.  The proposition says that the
given map satisfies $G_f \geq \{I , -I \}$, and the lemma says that this
must be an equality.

Let us give an explicit example in a slightly different way for $\bB_2$.
The example is a modification of an example given by
Al Helal~\cite{helal:prelim}.  An advantage of this construction
is that it allows explicit bounds on the $\sigma$'s.  On the other hand, it
does not generalize easily to more complicated denominators.

\begin{example} \label{example:explicit}
We will give this example in the $n=2$ case, but it is easy to
generalize to higher $n$.
We start with an automorphism $\varphi$ of $\bB_3$ as given by \eqref{e:auto}.
We will pick $\alpha = \langle - \sigma_1 , 0 , - \sigma_2 \rangle$,
and as $\alpha \in \bB_3$ we require that $\sigma_1^2+\sigma_2^2 < 1$.
Consider the homogeneous proper ball map $H \colon \bB_2 \to \bB_3$
given by
\begin{equation}
H(z_1,z_2) = \bigl(z_1^2,\sqrt{2}\,z_1z_2,z_2^2\bigr) .
\end{equation}
Write $\varphi \circ H$ as
\begin{equation}
\varphi \circ H (z) = \psi(z) = \bigl( \psi_1(z), \psi_2(z), \psi_3(z) \bigr) .
\end{equation}
Note that the denominator of this map is precisely
\begin{equation}
1 + \sigma_1 z_1^2 + \sigma_2 z_2^2 .
\end{equation}
The map is not in normal form; while
$\psi_2(0)=0$, we have $\psi_1(0) \not= 0$ and $\psi_3(0)\not= 0$.
By tensoring $\psi_1$ and $\psi_2$ by $H$, we get a new map that is in normal form; namely, we
consider the map
\begin{equation}
f = \bigl( ( \psi_1 \oplus \psi_3 ) \otimes H \bigr) \oplus \psi_2 .
\end{equation}
Then $f \colon \bB_2 \to \bB_7$ is a degree 4 map in normal form
with the desired denominator.
The map is invariant under $-I$ as all the monomials that appear are
quadratic and hence $G_f = \{ I, -I \}$.
\end{example}

\section{Acknowledgements}

The authors would like to thank the anonymous referee for their careful reading and helpful comments, which significantly improved the quality of this paper. The second author was in part supported by Simons Foundation collaboration grant 710294.




\def\MR#1{\relax\ifhmode\unskip\spacefactor3000 \space\fi%
  \href{http://mathscinet.ams.org/mathscinet-getitem?mr=#1}{MR#1}}

\begin{bibdiv}
\begin{biblist}

\bib{helal:prelim}{unpublished}{
   author={Al Helal, Abdullah},
   title={Rational Proper Maps between Balls},
   note={Preliminary research project at Oklahoma State University},
}

\bib{MRC}{article}{
   author={Brooks, Jennifer},
   author={Curry, Sean},
   author={Grundmeier, Dusty},
   author={Gupta, Purvi},
   author={Kunz, Valentin},
   author={Malcom, Alekzander},
   author={Palencia, Kevin},
   title={Constructing group-invariant CR mappings},
   journal={Complex Anal. Synerg.},
   volume={8},
   date={2022},
   number={4},
   pages={Paper No. 20, 7},
   issn={2524-7581},
   review={\MR{4488221}},
   doi={10.1007/s40627-022-00104-4},
}
\bib{C1}{article}{
   author={Cartan, Henri},
   title={Quotient d'un espace analytique par un groupe d'automorphismes},
   language={French},
   conference={
      title={Algebraic geometry and topology. A symposium in honor of S.
      Lefschetz},
   },
   book={
      publisher={Princeton Univ. Press, Princeton, NJ},
   },
   date={1957},
   pages={90--102},
   review={\MR{0084174}},
}
\bib{CatlinDAngelo}{article}{
   author={Catlin, David W.},
   author={D'Angelo, John P.},
   title={A stabilization theorem for Hermitian forms and applications to
   holomorphic mappings},
   journal={Math. Res. Lett.},
   volume={3},
   date={1996},
   number={2},
   pages={149--166},
   issn={1073-2780},
   review={\MR{1386836}},
   doi={10.4310/MRL.1996.v3.n2.a2},
}
\bib{F1}{article}{
   author={Forstneri\v{c}, Franc},
   title={Proper holomorphic maps from balls},
   journal={Duke Math. J.},
   volume={53},
   date={1986},
   number={2},
   pages={427--441},
   issn={0012-7094},
   review={\MR{0850544}},
   doi={10.1215/S0012-7094-86-05326-3},
}
\bib{F2}{article}{
   author={Forstneri\v{c}, Franc},
   title={Proper holomorphic mappings: a survey},
   conference={
      title={Several complex variables},
      address={Stockholm},
      date={1987/1988},
   },
   book={
      series={Math. Notes},
      volume={38},
      publisher={Princeton Univ. Press, Princeton, NJ},
   },
   isbn={0-691-08579-X},
   date={1993},
   pages={297--363},
   review={\MR{1207867}},
}
\bib{F3}{article}{
   author={Forstneri\v{c}, Franc},
   title={Extending proper holomorphic mappings of positive codimension},
   journal={Invent. Math.},
   volume={95},
   date={1989},
   number={1},
   pages={31--61},
   issn={0020-9910},
   review={\MR{0969413}},
   doi={10.1007/BF01394144},
}
\bib{JPDhermitian}{book}{
   author={D'Angelo, John P.},
   title={Hermitian analysis},
   series={Cornerstones},
   edition={2},
   note={From Fourier series to Cauchy-Riemann geometry},
   publisher={Birkh\"{a}user/Springer, Cham},
   date={2019},
   pages={x+229},
   isbn={978-3-030-16513-0},
   isbn={978-3-030-16514-7},
   review={\MR{3931729}},
   doi={10.1007/978-3-030-16514-7},
}
\bib{JPDhypersurfaces}{book}{
   author={D'Angelo, John P.},
   title={Several complex variables and the geometry of real hypersurfaces},
   series={Studies in Advanced Mathematics},
   publisher={CRC Press, Boca Raton, FL},
   date={1993},
   pages={xiv+272},
   isbn={0-8493-8272-6},
   review={\MR{1224231}},
}
\bib{JPDrationalbook}{book}{
   author={D'Angelo, John P.},
   title={Rational sphere maps},
   series={Progress in Mathematics},
   volume={341},
   publisher={Birkh\"{a}user/Springer, Cham},
   date={[2021] \copyright 2021},
   pages={xiii+233},
   isbn={978-3-030-75808-0},
   isbn={978-3-030-75809-7},
   review={\MR{4293989}},
   doi={10.1007/978-3-030-75809-7},
}
\bib{DL}{article}{
   author={D'Angelo, John P.},
   author={Lichtblau, Daniel A.},
   title={Spherical space forms, CR mappings, and proper maps between balls},
   journal={J. Geom. Anal.},
   volume={2},
   date={1992},
   number={5},
   pages={391--415},
   issn={1050-6926},
   review={\MR{1184706}},
   doi={10.1007/BF02921298},
}

\bib{DX1}{article}{
   author={D'Angelo, John P.},
   author={Xiao, Ming},
   title={Symmetries in CR complexity theory},
   journal={Adv. Math.},
   volume={313},
   date={2017},
   pages={590--627},
   issn={0001-8708},
   review={\MR{3649233}},
   doi={10.1016/j.aim.2017.04.014},
}

\bib{DX2}{article}{
   author={D'Angelo, John P.},
   author={Xiao, Ming},
   title={Symmetries and regularity for holomorphic maps between balls},
   journal={Math. Res. Lett.},
   volume={25},
   date={2018},
   number={5},
   pages={1389--1404},
   issn={1073-2780},
   review={\MR{3917732}},
   doi={10.4310/MRL.2018.v25.n5.a2},
}
\bib{G1}{article}{
   author={Grundmeier, Dusty},
   title={Signature pairs for group-invariant Hermitian polynomials},
   journal={Internat. J. Math.},
   volume={22},
   date={2011},
   number={3},
   pages={311--343},
   issn={0129-167X},
   review={\MR{2782691}},
   doi={10.1142/S0129167X11006775},
}

\bib{L2011}{article}{
   author={Lebl, Ji\v{r}\'{\i}},
   title={Normal forms, Hermitian operators, and CR maps of spheres and
   hyperquadrics},
   journal={Michigan Math. J.},
   volume={60},
   date={2011},
   number={3},
   pages={603--628},
   issn={0026-2285},
   review={\MR{2861091}},
   doi={10.1307/mmj/1320763051},
}

\bib{Lnormal}{article}{
   author={Lebl, Ji\v r\'\i },
   title={Exhaustion functions and normal forms for proper maps of balls},
   year={2024},
   journal={Math. Ann.},
   doi={10.1007/s00208-024-02837-5},
   note={Preprint \href{https://arxiv.org/abs/2212.06102}{arXiv:2212.06102}},
}

\bib{Li}{article}{
   author={Lichtblau, Daniel},
   title={Invariant proper holomorphic maps between balls},
   journal={Indiana Univ. Math. J.},
   volume={41},
   date={1992},
   number={1},
   pages={213--231},
   issn={0022-2518},
   review={\MR{1160910}},
   doi={10.1512/iumj.1992.41.41012},
}
\bib{Rudin}{article}{
   author={Rudin, Walter},
   title={Proper holomorphic maps and finite reflection groups},
   journal={Indiana Univ. Math. J.},
   volume={31},
   date={1982},
   number={5},
   pages={701--720},
   issn={0022-2518},
   review={\MR{0667790}},
   doi={10.1512/iumj.1982.31.31050},
}
\bib{Z}{article}{
   author={Gevorgyan, Edgar},
   author={Wang, Haoran},
   author={Zimmer, Andrew},
   title={A rigidity result for proper holomorphic maps between balls},
   journal={Proc. Amer. Math. Soc.},
   volume={152},
   date={2024},
   number={4},
   pages={1573--1585},
   issn={0002-9939},
   review={\MR{4709227}},
   doi={10.1090/proc/16717},
}

\end{biblist}
\end{bibdiv}


\end{document}